\DeclareSymbolFontAlphabet{\mathbb}{AMSb}
\DeclareSymbolFontAlphabet{\mathbbl}{bbold}
\renewcommand{\epsilon}{\varepsilon}
\renewcommand{\rho}{\varrho}
\renewcommand{\phi}{\varphi}
\newcommand{\NN}{\ensuremath{\mathbb{N}}\xspace}
\newcommand{\ZZ}{\ensuremath{\mathbb{Z}}\xspace}
\newcommand{\FF}{\ensuremath{\mathbb{F}}\xspace}
\newcommand{\TT}{\ensuremath{\mathbb{T}}\xspace}
\newcommand{\DD}{\ensuremath{\mathbbl{\Delta}}\xspace}
\newcommand{\tc}{\ensuremath{\mathrm{TC}}}
\newcommand{\thh}{\ensuremath{\mathrm{THH}}}
\newcommand{\tp}{\ensuremath{\mathrm{TP}}}
\newcommand{\tr}{\ensuremath{\mathrm{TR}}}
\DeclarePairedDelimiter\floor{\lfloor}{\rfloor}
\newtheorem{theorem}{Theorem}
\newtheorem{thm}{Theorem}[section]
\newtheorem{lem}[thm]{Lemma}
\newtheorem{cor}[thm]{Corollary}
\newtheorem{conj}[thm]{Conjecture}
\theoremstyle{remark}
\newtheorem{rem}[thm]{Remark}
\newtheorem{con}[thm]{Construction}
\title{\texorpdfstring{$\tr$ of quasiregular semiperfect rings is even}{TR of quasiregular semiperfect rings is even}}
\author{Micah Darrell, Noah Riggenbach}
\begin{document}
\maketitle
\begin{abstract}
We show that $\tr_{2i+1}(S)=0$ for all $i\in \NN$ and all $S$ quasiregular semiperfect. 
\end{abstract}
\tableofcontents

\section{Introduction}

In this paper we prove that the topological restriction homology $\tr(A)$ is locally even in the characteristic $p$ quasi-syntomic topology. In fact in the characteristic $p$ situation a stronger result is true.

\begin{theorem}[Theorem~\ref{thm: A in text}]~\label{thm: A}
    Let $S$ be a quasiregular semiperfect $\mathbb{F}_p$-algebra. Then $\tr_{2i-1}(S)=0$ for all $i\in \NN$. 
\end{theorem}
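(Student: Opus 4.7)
The plan is to reduce the theorem to a level-wise analysis of the tower $\{\thh(S)^{C_{p^{n-1}}}\}_{n \geq 1}$ whose inverse limit along the restriction maps is $\tr(S)$. The Milnor short exact sequence
\[
0 \;\to\; {\lim}^1_n\, \pi_{2i}\thh(S)^{C_{p^{n-1}}} \;\to\; \pi_{2i-1}\tr(S) \;\to\; \lim_n \pi_{2i-1}\thh(S)^{C_{p^{n-1}}} \;\to\; 0
\]
reduces the theorem to two claims: (i) each genuine fixed-point spectrum $\thh(S)^{C_{p^{n-1}}}$ is concentrated in even degrees, and (ii) the inverse system $\{\pi_{2i}\thh(S)^{C_{p^{n-1}}}\}_n$ is Mittag--Leffler, so that the $\lim^1$ term vanishes.

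For (i), I would induct on $n$. The base case $n = 1$ is the evenness of $\thh(S)$ for $S$ quasiregular semiperfect, which follows from the Bhatt--Morrow--Scholze identification of $\pi_{2\ast}\thh(S;\ZZ_p)$ with the divided-power filtration on $S$ (and vanishing of $\pi_{\mathrm{odd}}$). For the inductive step, I would apply the Nikolaus--Scholze pullback formula
\[
\thh(S)^{C_{p^n}} \;\simeq\; X \times_{X^{tC_p}} X^{hC_p}, \qquad X := \thh(S)^{C_{p^{n-1}}},
\]
to $X$ with its residual cyclotomic structure; the structure maps are the cyclotomic Frobenius $\phi: X \to X^{tC_p}$ and the canonical map $X^{hC_p} \to X^{tC_p}$. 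By induction $X$ is even, and this in turn makes $X^{tC_p}$ even via the Tate spectral sequence and the characteristic-$p$ Bott-like periodic class for $\thh(S)^{tC_p}$. The delicate point is that $X^{hC_p}$ is not a priori even: the group cohomology of $C_p$ contributes classes in both parities. Running the Mayer--Vietoris long exact sequence of the pullback, the odd-degree homotopy of $X^{hC_p}$ must be exactly cancelled by the image under $\phi$ of the even-degree homotopy of $X$, or equivalently absorbed into the connecting map from $\pi_{\mathrm{even}}(X^{tC_p})$. Establishing this cancellation is the crux of the argument and is the step that truly uses the characteristic-$p$ quasiregular semiperfect hypothesis, via the Frobenius-semilinearity properties special to that setting which force the requisite surjectivity of $\phi$ on homotopy.

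For (ii), I would apply the Hesselholt--Madsen fundamental cofiber sequence
\[
\thh(S)_{hC_{p^{n-1}}} \to \thh(S)^{C_{p^{n-1}}} \xrightarrow{R} \thh(S)^{C_{p^{n-2}}},
\]
and observe that once (i) is established, the long exact sequence on $\pi_\ast$ forces $R$ to be surjective on even-degree homotopy, because the only possible obstruction lies in the odd-degree homotopy of $\thh(S)^{C_{p^{n-2}}}$, which vanishes by (i). This gives the Mittag--Leffler condition and the vanishing of the $\lim^1$ term, completing the proof.

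The principal obstacle is the inductive step in (i): evenness of $\thh(S)^{C_{p^n}}$ is not a formal consequence of evenness of $X$ and $X^{tC_p}$, because $X^{hC_p}$ genuinely carries odd-degree homotopy in general. One has to exhibit precise surjectivity of the cyclotomic Frobenius specific to the characteristic-$p$ quasiregular semiperfect setting to annihilate these classes after passing to the pullback; this is where the theorem's restrictive hypotheses do essential work, and without them one expects genuine odd-degree contributions to $\tr(S)$.
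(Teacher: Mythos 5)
Your global reduction (Milnor sequence for $\tr(S)\simeq\varprojlim_n\thh(S)^{C_{p^{n-1}}}$) is fine, but both pillars it rests on are unestablished, and the argument you give for (ii) is actually incorrect. In the long exact sequence of the fundamental cofiber sequence $\thh(S)_{hC_{p^{n-1}}}\to\thh(S)^{C_{p^{n-1}}}\xrightarrow{R}\thh(S)^{C_{p^{n-2}}}$, the obstruction to surjectivity of $R$ on $\pi_{2i}$ is the connecting map into $\pi_{2i-1}\bigl(\thh(S)_{hC_{p^{n-1}}}\bigr)$, not the odd homotopy of $\thh(S)^{C_{p^{n-2}}}$; evenness of the fixed points forces that connecting map to be \emph{surjective}, not zero, and the odd homotopy of the orbits does not vanish (the group homology of $C_{p^{n-1}}$ with coefficients in the even ring $\thh_*(S)$ contributes in odd total degrees). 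Already for $S=\FF_p$, which is quasiregular semiperfect, Hesselholt--Madsen give $\pi_*\thh(\FF_p)^{C_{p^{n-1}}}\cong\ZZ/p^n[\sigma_n]$ with $R(\sigma_n)=p\sigma_{n-1}$ up to a unit, so $R$ is multiplication by $p^i$ on $\pi_{2i}$ and is not surjective, and $\pi_{2i-1}\thh(\FF_p)_{hC_{p^{n-1}}}\neq 0$. Mittag--Leffler may still hold (images can stabilize without surjectivity), but establishing it requires actually computing the transition maps, which your outline never does.

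Claim (i) is likewise the hard content, not a lemma: you acknowledge the inductive step is unproven, the pullback formula is misquoted (Nikolaus--Scholze give $\thh(S)^{C_{p^n}}\simeq\thh(S)^{hC_{p^n}}\times_{(\thh(S)^{tC_p})^{hC_{p^{n-1}}}}\thh(S)^{C_{p^{n-1}}}$; the spectrum $X=\thh(S)^{C_{p^{n-1}}}$ does not come with a Frobenius $X\to X^{tC_p}$ in the form you use), and the "surjectivity of $\phi$" you invoke to cancel the odd classes of $X^{hC_p}$ is precisely the kind of statement that needs an explicit computation in the quasiregular semiperfect setting. The paper proceeds along a genuinely different and more computable tower: via the Hesselholt/McCandless curves formula $\tr(S;\ZZ_p)\simeq\varprojlim_e\Omega K(S[x]/x^e,(x);\ZZ_p)$ and Dundas--Goodwillie--McCarthy, it computes the syntomic cohomology $\ZZ_p(i)$ of the truncated polynomial extensions of the prototype rings $S_T$ explicitly (divided power de Rham complexes), shows the relative terms are concentrated in odd degrees with explicit generators, verifies Mittag--Leffler in the truncation parameter $e$ by a divisibility estimate on the explicit transition maps, and finally reduces an arbitrary quasiregular semiperfect $S$ to the prototypes by a surjectivity lemma (surjectivity on $\pi_1L$ implies surjectivity on odd $\tr$ via the filtration on $\tr$). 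None of these computational inputs has a counterpart in your proposal, so as it stands there is a genuine gap at both (i) and (ii).
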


After finishing the work on this paper we found out that Devalapurkar and Mondal have independently proven the same result by different methods. 

This stronger statement is a characteristic $p$ phenomenon. The much weaker statement of $\mathrm{TR}(R)$ being even for perfectoid rings is known to be false in mixed characteristic, for example if $\mathcal{O}_{C}$ is the ring of integers in a complete and algebraically closed field $C/\mathbb{Q}_p$ then $\mathrm{TR}_1(\mathcal{O}_{C})=0$ only when $C$ is spherically complete. Nevertheless the fact that for perfectoid rings this evenness does appear locally and that several (but not all) of the arguments we make below have analogues in mixed characteristic motivate us to make the following conjecture.

\begin{conj}
    Let $A$ be a quasisyntomic ring. Then there exists some quasisyntomic cover $A\to S$ such that $\tr(S)$ is concentrated in even degrees.
\end{conj}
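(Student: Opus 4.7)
The plan is to establish evenness of $\tr(S) = \lim_R \tr^n(S)$, with $\tr^n(S) = \thh(S)^{C_{p^{n-1}}}$, in two stages: first, show each $\tr^n(S)$ is concentrated in even degrees by induction on $n$; second, pass to the limit via the Milnor $\lim^1$ exact sequence. The essential input is the evenness of $\pi_*\thh(S)$, $\pi_*\tc^-(S)$, $\pi_*\tp(S)$, and $\pi_*\thh(S)^{tC_p}$, all established by Bhatt--Morrow--Scholze for quasiregular semiperfect $\FF_p$-algebras, together with the explicit multiplicative $\sigma$-structure on these homotopy groups.

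For the inductive step, I would use the Nikolaus--Scholze reconstruction of bounded below $p$-typical cyclotomic spectra, which expresses the genuine fixed points $X^{C_{p^n}}$ as an iterated pullback involving homotopy fixed points $X^{hC_{p^k}}$ and the cyclotomic Frobenius $\phi : X \to X^{tC_p}$. Applied to $X = \thh(S)$, a single step of the iteration has the form
$$\thh(S)^{C_{p^n}} \simeq \thh(S)^{hC_{p^n}} \times_{(\thh(S)^{tC_p})^{hC_{p^{n-1}}}} \thh(S)^{C_{p^{n-1}}}.$$
Given the inductive hypothesis that $\thh(S)^{C_{p^{n-1}}}$ is even, one must further establish evenness of $\thh(S)^{hC_{p^n}}$ and $(\thh(S)^{tC_p})^{hC_{p^{n-1}}}$, and finally surjectivity of the relevant structure map on $\pi_\text{even}$ so that the pullback stays even. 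These intermediate evenness claims should follow by running the homotopy and Tate fixed point spectral sequences for the even spectra $\thh(S)$ and $\thh(S)^{tC_p}$, using that over $\FF_p$ the differentials and extensions are controlled by the $\sigma$-periodicity coming from $\tc^-(S)$ and $\tp(S)$ being even.

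To pass from evenness of each $\tr^n(S)$ to evenness of $\tr(S) = \lim_n \tr^n(S)$, the Milnor sequence
$$0 \to \lim^1_n \pi_{2i} \tr^n(S) \to \pi_{2i-1} \tr(S) \to \lim_n \pi_{2i-1} \tr^n(S) \to 0$$
reduces the claim to surjectivity of the restriction maps $R : \tr^{n+1}(S) \to \tr^n(S)$ on $\pi_{2i}$. I expect this to follow from an explicit identification of $\pi_{2i}\tr^n(S)$ with a Witt-vector-like tower, generalizing the Hesselholt--Madsen computation that $\tr_*$ of a perfect $\FF_p$-algebra $R$ is $W(R)$ in degree zero and vanishes otherwise. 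The main obstacle is the intermediate evenness of $\thh(S)^{hC_{p^n}}$: because $H^*(BC_{p^n}; \FF_p)$ has odd-degree generators, evenness cannot come from the $E_2$-page alone but must be extracted from the specific $\sigma$-structure of $\pi_*\thh(S)$, which is the characteristic $p$ feature whose analog fails in mixed characteristic (as noted in the introduction's remark about $\mathcal{O}_C$).
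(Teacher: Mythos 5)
The statement you were asked about is stated in the paper as a \emph{conjecture}: the paper gives no proof of it, and in fact only proves the characteristic $p$ analogue (Theorem~A, via Theorem~\ref{thm: A in text}). Your proposal does not close this gap. Everything you use --- evenness of $\thh(S)$, $\tc^{-}(S)$, $\tp(S)$, $\thh(S)^{tC_p}$ for quasiregular semiperfect $\FF_p$-algebras --- lives in characteristic $p$, so at best you are sketching a new proof of Theorem~A, not of the conjecture, which concerns arbitrary quasisyntomic rings and whose whole content is the mixed-characteristic case. Worse, your argument is essentially formal in those evenness inputs (Nikolaus--Scholze pullbacks, fixed-point spectral sequences, a Milnor $\varprojlim^1$ argument), and those same inputs hold for mixed-characteristic quasiregular semiperfectoid rings, e.g.\ for $\mathcal{O}_C$; yet $\tr_1(\mathcal{O}_C)\neq 0$ unless $C$ is spherically complete, as the introduction points out. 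So if your chain of deductions were valid as stated it would prove something false. This means the characteristic $p$ input has to enter in an essential, non-formal way, and in your sketch it enters only through the unproven assertion that ``differentials and extensions are controlled by the $\sigma$-periodicity,'' i.e.\ exactly at the step you yourself flag as the main obstacle (evenness of $\thh(S)^{hC_{p^n}}$, where $H^*(BC_{p^n};\FF_p)$ has odd classes), and again at the surjectivity of the restriction maps needed to kill $\varprojlim^1$. Neither step is established, and neither is routine.

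For comparison, the paper's route to the characteristic $p$ statement avoids genuine fixed points and induction on $n$ entirely: it uses the Hesselholt--McCandless identification $\tr(A;\ZZ_p)\simeq \varprojlim_e \Omega\mathrm{K}(A[x]/x^e,(x);\ZZ_p)$, Dundas--Goodwillie--McCarthy to replace $\mathrm{K}$ by $\tc$, the BMS motivic filtration to reduce to an explicit computation of $\ZZ_p(i)$ of truncated polynomial rings over the prototype rings $S_T=k[y_t^{1/p^\infty}]/(y_t)$ (where the syntomic complexes are concentrated in degree $1$ with completely explicit generators), a Mittag--Leffler argument in the truncation parameter $e$ (not in the fixed-point level $n$), and finally a surjectivity-on-cotangent-complexes argument (Lemmas~\ref{lem: surj on cotangent implies surj on prism} and~\ref{lem: surj on cotangent implies surj on odd tr}) to pass from prototypes to all quasiregular semiperfect rings. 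If you want to pursue your approach, the honest target is a new proof of Theorem~A, and the two missing lemmas (evenness of $\thh(S)^{hC_{p^n}}$ and $(\thh(S)^{tC_p})^{hC_{p^{n-1}}}$ for quasiregular semiperfect $S$, plus surjectivity of the restriction maps on $\pi_{2i}$) would need genuine proofs that visibly use more than evenness of $\thh$, $\tc^{-}$ and $\tp$; the conjecture itself would still require a separate idea for choosing covers in mixed characteristic.
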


Here is an overview of our proof:

Hesselholt in \cite{Hesselholt_curves} and McCandless in \cite{McCandless_curves} prove that
\[
\mathrm{TR}(A;\ZZ_p) \simeq \varprojlim_{e} \Omega \mathrm{K}\left(A[t]/t^e, (t); \ZZ_p \right),
\]
where the right hand side is known as the curves on $\mathrm{K}$-theory of $A$. Here $\tr(A;\ZZ_p)$ is the $p$-completion of integral topological restriction homology and is related to $p$-typical topological restriction homology by a canonical idempotent splitting $\tr(A;\ZZ_p)\simeq \prod_{m\in I_p}\tr(A)$ where $I_p$ is the set of positive integers coprime to $p$. The Dundas-Goodwillie-McCarthy theorem \cite[Theorem 7.0.0.1]{Dundas_Goodwillie_McCarthy} provides an identification 
\[
\mathrm{K}_{*}(A[t]/t^e, (t)) \cong \mathrm{TC}_{*}(A[t]/t^e, (t)),
\]
which allows us to compute $\mathrm{TR}(A;\ZZ_p)$ as
\[
\mathrm{TR}(A;\ZZ_p) \simeq \varprojlim_{e} \Omega \mathrm{TC}\left(A[t]/t^e, (t);\ZZ_p\right).
\]

In \cite{BMS2} Bhatt, Morrow, and Scholze construct a filtration on $\mathrm{TC}$, known as the motivic filtration, for $i \in \mathbb{N}$, the $i$'th graded piece of this filtration on $\mathrm{TC}(A)$ is denoted $\mathbb{Z}_p(i)(A)[2i]$ and 
\[
\mathbb{Z}_p(i)(A) \cong \mathrm{fib}\left( \phi_i - \mathrm{can} : \mathcal{N}^{\geq i} \widehat{\DD}_A\{i\} \to \widehat{\DD}_A\{i\} \right). \\
\]
Here $\widehat{\DD}$ is the Nygaard-completed prismatic cohomology, $\mathcal{N}^{\geq i}\DD$ is the $i$'th piece of the Nygaard filtration, and $\{i\}$ denotes the $i$'th Breuil-Kisin twist, which we will ignore moving forward since we are working over $\mathbb{F}_p$ and these line bundles are canonically trivializable.

For an $\mathbb{F}_p$-algebra $A$, there is an identification $\widehat{\DD}_A \simeq \mathrm{LW}\Omega_A$, between the prismatic cohomology and the derived de Rham Witt complex. If the ring $A$ has a $\delta$-lift to characteristic 0, it is explained by Mathew in \cite{Mathew_recent_advances} how to compute the derived de Rham Witt complex $\mathrm{LW}\Omega_A$ and its Nygaard filtration, using the divided power de Rham complex of the lift. 

We write $S_e$ for the $\mathbb{F}_p$-algebra 
\[
S_e \cong k[y_{,}^{1/p^{\infty}} x]/(y, x^e),
\]
for a perfect $\mathbb{F}_p$-algebra $k$. This lets us identify 
\[
\mathrm{LW}\Omega_{S_e} \simeq \left( W(k)\left[y^{1/p^{\infty}}, \frac{y^n}{n!}\right]\left[x, \frac{x^{em}}{m!}\right] \xrightarrow{ \ \  d \ \ } W(k)\left[y^{1/p^{\infty}}, \frac{y^n}{n!}\right]\left[x, \frac{x^{em}}{m!}\right]\ dx \right)^{\wedge}_{p}
\]
where $m$ and $n$ range across $\mathbb{N}$  as in \cite{Mathew_recent_advances} and the work of Sulyma in \cite{Sulyma_truncated}. 

From here we can explicitly compute the cohomology of the complexes $\mathbb{Z}_p(i)(S_e)$ for all $i, e$. These complexes only have cohomology in degree 1, so the BMS spectral sequence, which is just the spectral sequence of the motivic filtration on $\mathrm{TC}$, collapses and identifies 
\[
\mathrm{TC}_{2i - 1}\left(S_e, (x)\right) \cong \mathrm{H}^1\left( \mathbb{Z}_p(i)(S_e) \right). \\
\]

This identification allows us to explicitly compute the transition maps on the homotopy groups of the diagram
\[
\mathrm{TR}(k[y^{1/p^\infty}]/y;\ZZ_p) \simeq \varprojlim_{e} \Omega \mathrm{TC}\left(S_e, (t)\right),
\]
and from here we check by hand that these transition maps satisfy the Mittag-Leffler condition. This implies that $\mathrm{TR}(k[y^{1/p^\infty}]/y)$ is concentrated in even degrees. 

For a given set $T$ we will call the quasiregular semiperfect rings \[S_T:=k[y_t^{1/p^{\infty}}|t\in T]/(y_t)\] prototype quasiregular semiperfect rings. The calculations described above extend to the prototype quasiregular semiperfect rings case 
\[
S_{e,T} = S_T[x]/(x^e)
\]
and we also verify that $\mathrm{TR}(S_T)$ is concentrated in even degrees.

Every characteristic $p$ quasisyntomic ring admits a cover by a quasiregular semiperfect ring, and every quasiregular semiperfect ring $S$ admits a map $S_T\to S$ for some prototype quasiregular semiperfectoid ring such that $L_{S_T/k}[-1]\to L_{S/k}[-1]$ is surjective. Using the filtration on $\mathrm{TR}$ from \cite{riggenbach2023ktheory}, Theorem~\ref{thm: A} follows. Along the way we also get the following results on algebraic $\mathrm{K}$-theory which might be of independent interest.

\begin{theorem}[Theorem~\ref{thm: B in text}]
    Let $S_T=k[y_t^{1/p^{\infty}}|t\in T]/(y_t)$ where $T$ is some set, and let $S_{e,T}=S_T[x]/(x^e)$. Then for all $i\in \NN$ \[\mathrm{K}_{2i-1}(S_{e,T}, (x))\cong \bigoplus_{\substack{m \in I_p,  \\ \alpha \in \mathbb{N}[1/p]^{\oplus T}}} W_h(k)
\]
where $I_p$ is the set of integers coprime to $p$, and $h = h(p, i, e, m, \alpha)$. The groups $\mathrm{K}_{2i}(A_{e,T},(x))\cong 0$.
\end{theorem}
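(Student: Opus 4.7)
The starting point is to replace $\mathrm{K}$-theory by topological cyclic homology. Since $(x)\subset S_{e,T}$ is a nilpotent ideal, the Dundas--Goodwillie--McCarthy theorem gives $\mathrm{K}(S_{e,T},(x))\simeq \mathrm{TC}(S_{e,T},(x))$, and after $p$-completion the standard idempotent splitting reduces the computation of $\mathrm{K}_{*}(S_{e,T},(x))$ to $\mathrm{TC}_{*}(S_{e,T},(x);\mathbb{Z}_p)$ summed over the prime-to-$p$ index set $I_p$. Applying the BMS motivic filtration to the latter, it suffices to prove that the syntomic cohomology complex $\mathbb{Z}_p(i)(S_{e,T})$ has cohomology concentrated in degree $1$, because then the motivic spectral sequence collapses at $E_2$ and yields
\[
\mathrm{TC}_{2i-1}(S_{e,T},(x);\mathbb{Z}_p)\cong \mathrm{H}^{1}(\mathbb{Z}_p(i)(S_{e,T})),\qquad \mathrm{TC}_{2i}(S_{e,T},(x);\mathbb{Z}_p)\cong 0.
\]

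To analyze $\mathbb{Z}_p(i)(S_{e,T})$ explicitly, I would use the $\delta$-lift of $S_{e,T}$ to characteristic zero and the recipe of Mathew (and Sulyma) to present the Nygaard-completed prismatic cohomology $\widehat{\mathbb{D}}_{S_{e,T}}\simeq \mathrm{LW}\Omega_{S_{e,T}}$ as a $p$-completed divided-power de Rham complex of the shape
\[
\Bigl(W(k)\bigl[y_t^{1/p^{\infty}},\tfrac{y_t^{n}}{n!}\bigr]_{t\in T}\bigl[x,\tfrac{x^{em}}{m!}\bigr]\xrightarrow{d}W(k)\bigl[y_t^{1/p^{\infty}},\tfrac{y_t^{n}}{n!}\bigr]_{t\in T}\bigl[x,\tfrac{x^{em}}{m!}\bigr]\,dx\Bigr)^{\wedge}_{p},
\]
together with its Nygaard filtration. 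Taking the fiber of $\phi_i-\mathrm{can}$ on $\mathcal N^{\geq i}\widehat{\mathbb{D}}_{S_{e,T}}\to \widehat{\mathbb{D}}_{S_{e,T}}$ monomial-by-monomial in the $y_t^{\alpha}$ and $x^{j}\,dx$ basis reduces the problem to a direct sum, indexed by $\alpha\in\mathbb{N}[1/p]^{\oplus T}$ and by the $x$-degree, of very simple two-term complexes over $W(k)$. On each such summand the map $\phi_i-\mathrm{can}$ is either injective with cokernel of the form $W_h(k)$, where $h=h(p,i,e,m,\alpha)$ is the exact $p$-adic valuation controlling the failure of divisibility, or an isomorphism. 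This gives precisely
\[
\mathrm{H}^{0}(\mathbb{Z}_p(i)(S_{e,T}))=0,\qquad \mathrm{H}^{1}(\mathbb{Z}_p(i)(S_{e,T}))\cong \bigoplus_{\alpha\in\mathbb{N}[1/p]^{\oplus T}}W_{h}(k),
\]
with the relative $(x)$-part picked out by discarding the $\alpha$-only summands.

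Finally, reassembling the prime-to-$p$ summands of the idempotent splitting of $\mathrm{K}(S_{e,T},(x);\mathbb{Z}_p)$ multiplies the indexing set by $I_p$ and changes $h$ to its $m$-dependent value $h(p,i,e,m,\alpha)$, producing the claimed formula for $\mathrm{K}_{2i-1}$ and the vanishing of $\mathrm{K}_{2i}$. The hard part is not the reduction to syntomic cohomology, which is essentially formal given the references, but the explicit bookkeeping in the second step: one must carefully separate the monomials in $y_t^{1/p^{\infty}}$ and $x$, track which monomials live in which step of the Nygaard filtration, and identify the kernel/cokernel of $\phi_i-\mathrm{can}$ with a Witt vector of the right length $h$. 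The twist $\{i\}$ drops out because we are over $\mathbb{F}_p$, which is what makes a fully explicit answer possible.
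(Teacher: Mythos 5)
Your overall skeleton (DGM to reduce to relative $\mathrm{TC}$, then the BMS/motivic filtration and an explicit divided-power de Rham--Witt model of $\mathcal{N}^{\geq i}\widehat{\DD}_{S_{e,T}}\to \widehat{\DD}_{S_{e,T}}$, with degeneration because the relative syntomic complexes sit in degree $1$) matches the paper. But there is a genuine error in how you organize the computation of $\mathrm{H}^*(\mathbb{Z}_p(i)(S_{e,T}))$ and in where the $I_p$-indexing comes from. The map $\phi_i-\mathrm{can}$ does \emph{not} decompose ``monomial-by-monomial'': the divided Frobenius sends the bidegree $(m,\alpha)$ piece to the bidegree $(pm,p\alpha)$ piece, so on a single monomial summand $\phi_i-\mathrm{can}$ is not an endomorphism and there is no ``very simple two-term complex over $W(k)$'' per monomial. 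The correct decomposition (this is the actual content of the paper's argument, following Hesselholt--Madsen) is into Frobenius towers $\bigoplus_{a\geq 0}(p^am,p^a\alpha)$ indexed by $m\in I_p$ and $\alpha\in\mathbb{N}[1/p]^{\oplus T}$; one then shows the canonical map is an isomorphism above the crossover index $s(p,i,e,m,\alpha)$ and the divided Frobenius is an isomorphism below it, which makes $\phi_i-\mathrm{can}$ surjective on $\mathrm{H}^1$ with kernel a single cyclic $W(k)$-module $W(k)/\{p^sm,e\}$ per tower. In particular the sum over $m\in I_p$ already appears inside $\mathrm{H}^1(\mathbb{Z}_p(i)(S_{e,T}))$; your formula $\bigoplus_{\alpha}W_h(k)$ for it is missing this indexing.

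Consequently your final step cannot be repaired as stated: there is no ``idempotent splitting'' of $p$-complete $\mathrm{K}$-theory or $\mathrm{TC}$ into copies indexed by $I_p$. That splitting, $\tr(A;\ZZ_p)\simeq\prod_{m\in I_p}\tr(A)$, is special to $\mathrm{TR}$ (big versus $p$-typical) and is used in the paper only later, at the curves/$\mathrm{TR}$ stage; invoking it here would double-count the $I_p$ factor once the tower decomposition is done correctly. Separately, the theorem asserts an identification of the \emph{integral} groups $\mathrm{K}_{2i-1}(S_{e,T},(x))$, so you also need the observation (via Land--Tamme) that the relative $\mathrm{K}$-groups are bounded $p$-power torsion, hence agree with their $p$-completion $\widetilde{\mathrm{TC}}_*(S_{e,T};\ZZ_p)$; this step is absent from your plan.
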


As a result of the above we obtain two Corollaries. The first is that we can express the derived de Rham-Witt forms purely in terms of prismatic cohomology.

\begin{cor}
    For $R$ an $\FF_p$-algebra the filtration of $\tr(R)$ constructed in \cite{riggenbach2023ktheory} agrees with the left Kan extension of the Postnikov filtration from smooth $\mathbb{F}_p$ algebras. In particular there are equivalences \[\mathrm{LW}\Omega^i_{R}\simeq \mathrm{fib}\left(\prod_{n\in \NN} \mathcal{N}^{\geq i}\widehat{\DD}_R/p^n\mathcal{N}^{\geq i+1}\widehat{\DD}_R\xrightarrow{\phi_i-can}\prod_{n\in \mathbb{N}}\widehat{\DD}_R/p^n\right)[i]\] for all simplicial commutative $\mathbb{F}_p$-algebras $R$. 
\end{cor}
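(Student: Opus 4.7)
The plan is to reduce the equivalence of filtrations to a check on quasiregular semiperfect $\FF_p$-algebras, where Theorem~\ref{thm: A} forces enough rigidity to identify the two filtrations.

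First, I would construct a canonical comparison map from the left Kan extension of the Postnikov filtration on smooth $\FF_p$-algebras to the filtration of \cite{riggenbach2023ktheory}. By the universal property of left Kan extension, such a map is determined by its restriction to smooth algebras. For smooth $R$ it is classical that $\tr_i(R)\cong W\Omega^i_R$ is concentrated in degree $i$; a direct check shows the $i$th graded piece of the \cite{riggenbach2023ktheory} filtration on $\tr(R)$ is also $W\Omega^i_R[i]$ sitting in that same degree, so an inductive completeness argument promotes this to an identification of the \cite{riggenbach2023ktheory} filtration with the Postnikov filtration on smooth inputs, producing the desired comparison map.

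Next, I would verify this comparison is an equivalence on quasiregular semiperfect rings. Let $S$ be such a ring. Theorem~\ref{thm: A} says $\tr(S)$ is concentrated in even degrees. The graded pieces of the \cite{riggenbach2023ktheory} filtration on $\tr(S)$ are identified, via the curves presentation
\[
\tr(S;\ZZ_p)\simeq \varprojlim_e \Omega\,\mathrm{TC}(S[x]/x^e,(x);\ZZ_p)
\]
and the motivic filtration on $\mathrm{TC}$, with the shifted fiber in the statement; the product over $n$ arises precisely from the inverse limit in $e$. Both graded pieces are concentrated in a single homotopical degree (by the explicit calculations already carried out in the proof of Theorem~\ref{thm: A}), so a standard inductive argument using completeness of both filtrations and the induced equivalence on each $\mathrm{gr}^i$ yields the equivalence of filtrations on $\tr(S)$.

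To extend to an arbitrary simplicial commutative $\FF_p$-algebra $R$, I would use that both functors commute with sifted colimits (the left-hand side by construction as a left Kan extension; the right-hand side by the sifted-colimit compatibility built into the construction of \cite{riggenbach2023ktheory}) and that every simplicial commutative $\FF_p$-algebra is a sifted colimit of quasiregular semiperfect rings. The ``in particular'' identification of $\mathrm{LW}\Omega^i_R$ with the shifted fiber on the right-hand side then follows by reading off the $i$th graded piece on both sides: on the Postnikov side it is $\mathrm{LW}\Omega^i_R[i]$ by definition of $\mathrm{LW}\Omega^i$ as the left Kan extension of $W\Omega^i$ from smooth algebras, and on the \cite{riggenbach2023ktheory} side it is exactly the fiber in the corollary. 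The main obstacle I expect is identifying the graded pieces of the \cite{riggenbach2023ktheory} filtration at a quasiregular semiperfect $S$ with the explicit fiber in the statement, which amounts to commuting the inverse limit over $e$ past the BMS motivic filtration on $\mathrm{TC}(S[x]/x^e,(x))$; the Mittag--Leffler style analysis developed to prove Theorem~\ref{thm: A} is precisely what should enable this exchange so that the formula drops out of calculations already present in the paper.
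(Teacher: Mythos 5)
Your core move on quasiregular semiperfect rings---use Theorem~\ref{thm: A} plus the fact that both associated gradeds sit in a single homotopical degree to degenerate both filtration spectral sequences and identify both filtrations with the double-speed Postnikov filtration---is exactly the paper's key step. But the scaffolding around it has two genuine problems. First, the globalization: you pass from quasiregular semiperfect rings to all simplicial commutative $\FF_p$-algebras by claiming every such algebra is a sifted colimit of quasiregular semiperfect rings and that both filtrations commute with sifted colimits. Neither claim is justified: the compact projective generators of simplicial commutative $\FF_p$-algebras are the polynomial algebras, which are smooth and not semiperfect, and the filtration of \cite{riggenbach2023ktheory} is produced by quasisyntomic sheafification with graded pieces built from infinite products over $n$ and completions, so sifted-colimit compatibility is not ``built in''. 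The paper goes the other way: both $F^{\geq *}_{BMS}\tr$ and the left Kan extended Postnikov filtration $F^{\geq *}_{H}\tr$ are quasisyntomic sheaves (the latter because $\tr$ and the graded pieces $(\mathrm{LW}\Omega^i)^{\wedge}_V[i]$ are), so agreement descends from the quasiregular semiperfect case. Relatedly, your step-one ``direct check'' that the \cite{riggenbach2023ktheory} filtration is the Postnikov filtration on smooth algebras is essentially the ``in particular'' formula evaluated at smooth $R$, i.e.\ a nontrivial identification of $W\Omega^i_R$ with a prismatic fiber; it is not a formality, and the paper deliberately never verifies anything of this sort on smooth inputs---it only uses Hesselholt's theorem $\tr_i\cong W\Omega^i$ to \emph{define} $F_H$ there.

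Second, the identification of the graded pieces with the displayed fiber is mislocated. The product over $n$ in $\mathrm{fib}\bigl(\prod_{n}\mathcal{N}^{\geq i}\widehat{\DD}_R/p^n\mathcal{N}^{\geq i+1}\widehat{\DD}_R\to \prod_n \widehat{\DD}_R/p^n\bigr)$ does not come from the inverse limit over $e$ in the curves presentation; it indexes the cyclic groups $C_{p^n}$ in the presentation of $\tr$ as the equalizer of the two maps from $\prod_{n\geq 0}\thh(-;\ZZ_p)^{hC_{p^n}}$ to $\prod_{n\geq 0}\thh(-;\ZZ_p)^{tC_{p^n}}$, and the fiber formula is the \emph{definition} of the graded pieces in \cite[Corollary 7.2]{riggenbach2023ktheory} once one notes that in characteristic $p$ the ideals $I_n$ are $(p^n)$ and the Breuil--Kisin twists are trivialized. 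So the ``main obstacle'' you anticipate---commuting $\varprojlim_e$ past the motivic filtration on $\mathrm{TC}(S[x]/x^e,(x))$ via a Mittag--Leffler argument---is not a step in the proof at all, and carrying it out would not by itself yield the stated formula: it would only produce $\varprojlim_e$ of syntomic cohomology groups, leaving the comparison with the $\prod_n$-fiber still to be done. Replace the curves-based identification and the sifted-colimit reduction with the quoted description of the graded pieces and with quasisyntomic descent, and your argument collapses onto the paper's.
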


After working on this paper we found out that Bhatt has obtained a result of this form by studying a cover of $R^{\mathrm{syn}}$.

The second Corollary we obtain is a new proof of a special case of Antieau-Mathew-Morrow-Nikolaus' cohomological bound on syntomic cohomology.
\begin{cor}[Theorem 5.1 for $\FF_p$-algebras, \cite{antieau2021beilinson}]~\label{cor: AMMN for char p}
    Let $R$ be an animated $\FF_p$-algebra. Then the syntomic cohomology $\ZZ_p(i)(R)$ is in cohomological degrees at most $i+1$. 
\end{cor}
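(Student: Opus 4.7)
The plan is to reduce the cohomological bound on $\ZZ_p(i)(R)$ to the classical case of smooth $\FF_p$-algebras via a left Kan extension argument, using the previous corollary as the key structural input.

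First I would argue that $\ZZ_p(i)$, as a functor on animated $\FF_p$-algebras, is the left Kan extension of its restriction to polynomial (equivalently, smooth) $\FF_p$-algebras. The previous corollary establishes this for $\mathrm{LW}\Omega^i$, the Hodge graded pieces of $\widehat{\DD}$, and the same machinery applied to the motivic (rather than Postnikov) filtration on $\mathrm{TC}$ should yield the analogous statement for $\ZZ_p(i)$. Concretely, one runs the argument in parallel for $\mathcal{N}^{\geq i}\widehat{\DD}_R$ and $\widehat{\DD}_R$, showing that each is left Kan extended from smooth $\FF_p$-algebras in a manner compatible with the divided Frobenius $\phi_i$ and the canonical inclusion, so that the fiber $\ZZ_p(i)(R) = \mathrm{fib}(\phi_i-\mathrm{can})$ is as well.

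Second, for a polynomial $\FF_p$-algebra $P$, the Geisser--Levine/Milne--Illusie--Kato identification $\ZZ_p(i)(P)\simeq W\Omega^i_{P,\log}[-i]$ together with the Artin--Schreier fiber sequence
\[
W\Omega^i_{P,\log} \to W\Omega^i_P \xrightarrow{\ 1-F\ } W\Omega^i_P,
\]
in which both copies of $W\Omega^i_P$ sit in cohomological degree zero, places $W\Omega^i_{P,\log}$ in cohomological degrees $[0,1]$. Hence $\ZZ_p(i)(P)$ lives in cohomological degrees $[i,i+1]$. Left Kan extension preserves this bound: writing any animated $\FF_p$-algebra $R$ as the geometric realization $|P_\bullet|$ of a simplicial polynomial algebra, the realization spectral sequence $E^2_{p,q} = \pi_p(\pi_q \ZZ_p(i)(P_\bullet)) \Rightarrow \pi_{p+q}\ZZ_p(i)(R)$ has $E^2_{p,q}=0$ for $q < -(i+1)$, yielding $\pi_n\ZZ_p(i)(R)=0$ for $n<-(i+1)$, which is the desired cohomological bound.

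The main obstacle is the first step: verifying that $\ZZ_p(i)$ commutes with left Kan extension from smooth $\FF_p$-algebras, since taking fibers does not generally commute with left Kan extensions. This is precisely the kind of statement that the method of the previous corollary is designed to control — by analyzing the Nygaard filtration piece by piece using a $p$-power product that isolates one graded piece at a time — and a direct adaptation of that argument to the motivic graded pieces of $\mathrm{TC}$ rather than to $\mathrm{LW}\Omega^\bullet$ should deliver the required compatibility.
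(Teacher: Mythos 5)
Your second and third steps are fine: for polynomial $P$ one indeed has $\ZZ_p(i)(P)\simeq W\Omega^i_{P,\log}[-i]$ sitting in cohomological degrees $[i,i+1]$, and a sifted colimit of functors with homotopy concentrated in degrees $\geq -(i+1)$ again has homotopy in degrees $\geq -(i+1)$. The genuine gap is your first step, and the proposed fix does not work as stated. You assert that $\ZZ_p(i)$ is left Kan extended from smooth $\FF_p$-algebras and that this follows by ``a direct adaptation'' of the argument of Corollary~\ref{cor: hesselholt filtration=BMS filtration} to the motivic graded pieces of $\mathrm{TC}$. But the mechanism of that corollary is special to $\tr$: after reducing by quasisyntomic descent to quasiregular semiperfect rings, both filtrations on $\tr$ are identified with the double-speed Postnikov filtration precisely because $\tr$ of such rings is \emph{even} (Theorem~\ref{thm: A in text}). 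There is no $\mathrm{TC}$-analogue of this step: $\mathrm{TC}$ of a quasiregular semiperfect ring is not even --- its odd homotopy groups are exactly the groups $\mathrm{H}^1(\ZZ_p(i))$ computed in Theorem~\ref{thm: B in text}, so $\ZZ_p(i)$ of such a ring occupies two cohomological degrees and the ``no room for differentials, both filtrations are Postnikov'' argument has nothing to bite on. So the left-Kan-extension property of $\ZZ_p(i)$, which is the entire content of your proof, is left unproved by the adaptation you describe.

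What the paper does instead avoids needing that property at all: by Corollary~\ref{cor: hesselholt filtration=BMS filtration}, $\mathrm{gr}^i\tr(R)[-2i]\simeq \mathrm{LW}\Omega^i_R[-i]$ is concentrated in cohomological degrees at most $i$, and $\ZZ_p(i)(R)$ is the fiber of $1-F$ on this object, which can raise the top degree by at most one --- giving $i+1$ directly. If you do want to salvage your route, the correct way to extract left-Kan-extendedness of $\ZZ_p(i)$ from the paper's results is through this same presentation: since fibers are shifted cofibers in spectra, left Kan extension is exact, so the fiber of $1-F$ between left Kan extended functors is left Kan extended; combined with $\ZZ_p(i)\simeq \mathrm{fib}\bigl(1-F\colon \mathrm{gr}^i\tr(-)[-2i]\to \mathrm{gr}^i\tr(-)[-2i]\bigr)$ and the identification of $\mathrm{gr}^i\tr$ with (a $V$-completion of) the left Kan extended de Rham--Witt forms, this gives your step one --- but the $1-F$ presentation compatible with the filtrations, and the care needed for the $V$-completion appearing in $F^{\geq *}_H\tr$, are exactly the missing ingredients, not a rerun of the earlier argument with $\mathrm{TC}$ in place of $\tr$. (A small side remark: $\mathrm{LW}\Omega^i$ is not the $i$-th Hodge graded piece of $\widehat{\DD}$; it arises as a graded piece of the filtration on $\tr$, which is why the detour through $\tr$ is essential here.)
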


\begin{rem}
    Note that Corollary~\ref{cor: AMMN for char p} together with \cite[Theorem 5.2]{antieau2021beilinson} in fact imply \cite[Theorem 5.1]{antieau2021beilinson} without the $\FF_p$-algebra assumption. Alternatively one could use Corollary~\ref{cor: AMMN for char p} together with the $p$-adic continuity result for syntomic cohomology of \cite[Proposition 7.4.10]{Bhatt_Lurie} to reduce the mixed characteristic statement to computing a Tot spectral sequence.
\end{rem}

\textbf{Acknowledgements} The first author would like to thank Ben Antieau for helpful discussions. The second author would like to thank Benjamin Antieau, Akhil Mathew, and Shubhodip Mondal for helpful conversations related to this paper. We would also like to thank Benjamin Antieau for comments and suggestions on an earlier version of this paper.

\section{The topological cyclic homology of truncated polynomial algebras over the prototype quasiregular semiperfect rings}
The goal of this section is to compute the syntomic cohomology for rings of the form $S_{e,T} = k[y_t^{1/p^{\infty}}|t\in T, x]/(y, x^e)$, where $k$ is a perfect $\mathbb{F}_p$-algebra and $T$ is some set. We will begin by first computing this in the case $T=\{1\}$. The key ideas for this calculation and the more general computation are the same, the only difference is that the case of general $T$ is more notationally challenging.

This is the truncated polynomial algebra on $x$ over the quasiregular semiperfect ring $S = k[y^{1/p^{\infty}}]/(y)$. Prismatic cohomology is symmetric monoidal, so we can compute 
\[
\DD_{S_{e,\{1\}}} \simeq \DD_S \otimes_{\mathbb{Z}_p}^L \DD_{\mathbb{F}_p[x]/x^e}.
\]

Since $S$ is quasiregular semiperfect, there is an identification $\DD_S \simeq \mathrm{A}_{crys}(S) \cong \mathrm{W}(k)[y^{1/p^{\infty}}, \frac{y^n}{n!}]^\wedge_p$, which is a flat $\mathbb{Z}_p$-algebra. Then using the identification of $\DD_{\mathbb{F}_p[x]/x^e}$ with the $p$-completed divided power de Rham complex as in \cite{Mathew_recent_advances} and \cite{Sulyma_truncated} we get
\[
\DD_{S_{e,\{1\}}} \simeq \left( W(k)\left[y^{1/p^{\infty}}, \frac{y^n}{n!}\right]\left[x, \frac{x^{ej}}{j!}\right] \xrightarrow{ \ \  d \ \ } W(k)\left[y^{1/p^{\infty}}, \frac{y^n}{n!}\right]\left[x, \frac{x^{ej}}{j!}\right] \ dx \right)^{\wedge}_{p}.
\]
Going forward we will ignore this $p$-completion. This is justified by two facts. The first is that both the canonical map and the divided Frobenius maps manifestly exist before $p$-completion in this case. The second point is that we are ultimately interested in the syntomic cohomology of $S_{e,\{1\}}$, ie the equalizer of the divided Frobenius and canonical maps. Since (derived) $p$-completion commutes with limits we may wait to take the $p$-completion until after computing this equalizer, but as we will see the syntomic cohomology will be bounded $p$-power torsion. 

From now on we take $n \in \mathbb{N}[1/p]$, so we can write $W(k)\left[y^{1/p^{\infty}}, \frac{y^n}{n!}\right]$ as $W(k)\left[\frac{y^n}{\floor*n!}\right]$. We use the ideas and notation from \cite{Sulyma_truncated} to write the above complex as
\[
\bigoplus_{m, n} W(k) \left\langle\frac{y^n}{\floor*n!} \frac{x^m}{\floor*{m/e}!} \right\rangle \xrightarrow{\ \ d \ \ } \bigoplus_{m, n} W(k) \left\langle \frac{y^n}{\floor*n!} \frac{x^m}{\lfloor (m-1)/e\rfloor!}\mathrm{dlog}x\right\rangle,
\]
up to $p$-completion, where $n$ ranges through $\mathbb{N}[1/p]$ and $m \in \mathbb{N}$.

Notice that this complex has a bi-grading by $m, n$ with a generator in degree $(m, n)$ given by $\frac{y^n}{\lfloor n \rfloor !} \frac{x^m}{\lfloor m/e \rfloor ! }$, this is a $\mathbb{N}[1/p]\times\mathbb{N}$ grading. Also recall that $d$ is a $W(k)\left[\frac{y^n}{\floor*n!}\right]$-linear differential since it is the extension of scalars of a $\ZZ_p$-linear differential along the map $\ZZ_p\to \DD_{S_{\{1\}}}\cong W(k)\left[\frac{y^n}{\floor*n!}\right]$. In particular $d\left(\frac{y^n}{\lfloor n \rfloor !} \frac{x^m}{\lfloor m/e\rfloor}\right) = \frac{y^n}{\lfloor n \rfloor !}d\left(\frac{x^m}{\lfloor m/e\rfloor}\right)$. The logarithmic differential $\mathrm{dlog} x$ satisfies $x * \mathrm{dlog} x = dx$, so $x^{m-1}\mathrm{d}x = x^m \mathrm{dlog}x$ which diagonalizes the differential. The point of these notations is to cleanly write the differential in terms of the generators
\begin{equation*}
\begin{split}
d\left(\frac{y^n}{\lfloor n \rfloor !}\frac{x^m}{\lfloor m/e \rfloor !}\right) & = m \frac{y^n}{\lfloor n \rfloor !} \frac{x^{m - 1}}{\lfloor \frac{m}{e} \rfloor !} dx \\
& = m \frac{\lfloor \frac{m - 1}{e} \rfloor !}{\lfloor m/e \rfloor !} \frac{y^n}{\lfloor n \rfloor !}\frac{x^{m - 1}}{\lfloor\frac{ m - 1}{e}\rfloor !} dx \\
& = \{ m, e \} \frac{y^n}{\lfloor n \rfloor !}\frac{x^m}{\lfloor (m-1)/e\rfloor!} \mathrm{dlog}x,
\end{split}
\end{equation*}
where
\[
\{ m, e \} := m \frac{\lfloor (m-1)/e \rfloor!}{\lfloor m/e \rfloor !} = 
\begin{cases}
m & e \nmid m \\
e & e \ | \ m.
\end{cases}
\]

To understand the Nygaard filtration we can identify $\mathrm{LW}\Omega_{S_{e,\{1\}}} \simeq \mathrm{A}_{crys}(S_{\{1\}}) \otimes_{\mathbb{Z}_p} \mathrm{LW}\Omega_{\mathbb{F}_p[x]/x^e}$, and since the Nygaard filtration is symmetric monoidal $\mathcal{N}^{\geq *} \DD_{S_{e,\{1\}}}\simeq \mathcal{N}^{\geq *}\mathrm{A}_{crys}(S_{\{1\}}) \otimes_{p^*\mathbb{Z}_p} \mathcal{N}^{\geq *}\mathrm{LW}\Omega_{\mathbb{F}_p[x]/x^e}$. Alternatively, $\mathcal{N}^{\geq i}\DD_{S_{e,\{1\}}}$ is the subcomplex where the Frobenius map $\phi$ is divisible by $p^i$. The Frobenius sends the degree $(m, n)$ generator $\frac{y^n}{\lfloor n \rfloor !} \frac{x^m}{\lfloor m/e \rfloor ! }$ to $\frac{y^{pn}}{\lfloor n \rfloor !} \frac{x^{pm}}{\lfloor m/e \rfloor ! }$ writing this in terms of the degree $(pm, pn)$ generator we get $$\frac{\lfloor pn \rfloor !}{{\lfloor n \rfloor !}}\frac{\lfloor pm/e \rfloor ! }{\lfloor m/e \rfloor ! }\frac{y^{pn}}{\lfloor pn \rfloor !}\frac{x^{pm}}{\lfloor pm/e \rfloor ! }.$$  Then using the Legendre formula to compute the $p$-adic valuation of these coefficients we see that $\mathcal{N}^{\geq i}\DD_{S_{e,\{1\}}}$ is given in bidegree $(m,n)$ by 
\[
p^{i - \lfloor m/e \rfloor - \lfloor n \rfloor }W(k)\left\langle\frac{y^n}{\floor*n!} \frac{x^m}{\floor*{m/e}!} \right\rangle \xrightarrow{\ \ d \ \ } p^{i - \lceil m/e \rceil - \lfloor n \rfloor }W(k)\left\langle \frac{y^n}{\floor*n!} \frac{x^m}{\lfloor (m-1)/e\rfloor!}\mathrm{dlog}x\right\rangle,
\]
if $i\geq \lceil m/e\rceil + \lfloor n\rfloor$ and $(\mathcal{N}^{\geq i}\DD_{S_{e,\{1\}}})_{(m,n)}=(\DD_{S_{e,\{1\}}})_{(m,n)}$ otherwise. 

We now need to compute the cohomology of $\mathrm{LW}\Omega_{S_{e,\{1\}}}$ and $\mathcal{N}^{\geq i}\mathrm{LW}\Omega_{S_{e,\{1\}}}$. In both cases $\mathrm{H}^0 = 0$, except in degrees $m=0$ since the differential is injective. The bigrading also passes to cohomology, so we have 
\[
\mathrm{H}^1(\mathrm{LW}\Omega_{S_{e,\{1\}}})_{(m, n)} = W(k) / \{ m, e \} \left\langle \frac{y^n}{\floor*n!} \frac{x^m}{\lfloor(m-1)/e\rfloor!}\mathrm{dlog}x\right\rangle,
\]
\[
\mathrm{H}^1(\mathcal{N}^{\geq i} \mathrm{LW}\Omega_{S_{e,\{1\}}})_{(m, n)} = \begin{cases} W(k) / p^{\lceil m/e\rceil - \lfloor m/e\rfloor} \{ m, e \} \left\langle p^{i - \lceil m/e \rceil - \lfloor n \rfloor} \frac{y^n}{\floor*n!} \frac{x^m}{\lfloor(m-1)/e\rfloor! }\mathrm{dlog}x\right\rangle & \textrm{ if }i\geq \lceil m/e\rceil + \lfloor n\rfloor\\
W(k) /\{ m, e \} \left\langle \frac{y^n}{\floor*n!} \frac{x^m}{\lfloor(m-1)/e\rfloor! }\mathrm{dlog}x\right\rangle & \textrm{ otherwise.} \end{cases}\]


Our next goal is to compute $\mathbb{Z}_p(i)(S_{e,\{1\}}) = \mathrm{fib}\left(\phi/p^i - \mathrm{can} : \mathcal{N}^{\geq i}\DD_{S_{e,\{1\}}} \to \DD_{S_{e,\{1\}}}\right)$, so now we can use the long exact sequence in cohomology to determine  $\mathrm{H}^k(\mathbb{Z}_p(i)(S_{e, \{1\}}))$ by computing the maps on cohomology \[(\phi/p^i - \mathrm{can}) : \mathrm{H}^k(\mathcal{N}^{\geq i}\mathrm{LW}\Omega_{S_{e,\{1\}}}) \to \mathrm{H}^k(\mathrm{LW}\Omega_{S_{e,\{1\}}}).\]

The canonical map comes from the inclusion, so it sends a degree $(m, n)$ generator $$p^{i - \lceil m/e \rceil - \lfloor n \rfloor }\frac{y^n}{\floor*n!} \frac{x^m}{\lfloor (m-1)/e\rfloor !}\mathrm{dlog}x \in \mathrm{H}^1(\mathcal{N}^{\geq i} \mathrm{LW}\Omega_{S_{e,\{1\}}})_{(m, n)}$$ to $$p^{i - \lceil m/e \rceil - \lfloor n \rfloor }\frac{y^n}{\floor*n!} \frac{x^m}{\lfloor (m-1)/e\rfloor!}\mathrm{dlog}x \in \mathrm{H}^1(\mathrm{LW}\Omega_{S_{e,\{1\}}})_{(m, n)}$$ which is $p^{i-\lceil m/e\rceil -\lfloor n\rfloor}$ times the generator of $\mathrm{H}^1(\mathrm{LW}\Omega_{S_{e,\{1\}}})$ if $i\geq \lceil m/e\rceil +\lfloor n\rfloor$ and is the identity map otherwise. 

Similarly, the divided Frobenius map sends a degree $(m, n)$ generator $$p^{i - \lceil m/e \rceil - \lfloor n \rfloor}\frac{y^n}{\floor*n!} \frac{x^m}{\lfloor (m-1)/e\rfloor!}\mathrm{dlog}x \in \mathrm{H}^1(\mathcal{N}^{\geq i} \mathrm{LW}\Omega_{S_{e,\{1\}}})_{(m, n)}$$ to $$p^{i - \lceil m/e \rceil - \lfloor n \rfloor - i + 1} \ \frac{y^{pn}}{\floor*n!} \frac{x^{pm}}{\lfloor(m-1)/e\rfloor !}\mathrm{dlog}x \in \mathrm{H}^1(\mathrm{LW}\Omega_{S_{e,\{1\}}})_{(pm, pn)}$$
where the $+1$ in the exponent of $p$ comes from 
\begin{equation*}
\mathrm{dlog}(x^p) = \frac{px^{p-1}}{x^p}dx = p \frac{1}{x}dx = p \mathrm{dlog}x. \\
\end{equation*}

Now writing this element in terms of the degree $(pm, pn)$ generator we get
\[
p^{\max\{i - \lceil m/e \rceil - \lfloor n \rfloor, 0\} - i + 1}\frac{y^{pn}}{\floor*n!} \frac{x^{pm}}{\lfloor(m-1)/e\rfloor!}\mathrm{dlog}x  = p^t \frac{y^{pn}}{\lfloor pn \rfloor !} \frac{x^{pm}}{\lfloor (pm-1)/e\rfloor !}\mathrm{dlog}x, 
\]
where $t$ is still to be determined. 
The Legendre formula (see \cite[Page 5]{Sulyma_truncated}) and properties of a valuation tell us that 
\begin{equation*}
\begin{split}
v_p \left( \frac{\lfloor pn \rfloor !}{\lfloor n \rfloor ! }\right)  & = \lfloor n \rfloor + v_p \left(\lfloor n \rfloor ! \right) - v_p \left( \lfloor  n \rfloor ! \right) \\ 
& = \lfloor n \rfloor,
\end{split}
\end{equation*}
and  
\begin{equation*}
\begin{split}
v_p \left( \frac{\lfloor (pm-1)/e \rfloor !}{\rfloor (m-1)/e \rfloor! }\right)  & = \lfloor m/e \rfloor + v_p\left( \frac{ \{pm, e\} }{ p \{m, e \} } \right) \\ 
& = \lceil m/e \rceil - 1.
\end{split}
\end{equation*}
So $t = \max\{i - \lceil m/e \rceil - \lfloor n \rfloor,0\} - i + 1 + \lfloor n \rfloor + \lceil m/e \rceil -1.$ If $\lceil m/e \rceil + \lfloor n \rfloor > i$, then $\max\{i - \lceil m/e \rceil - \lfloor n \rfloor,0\} = 0$ and $t = \lceil m/e \rceil + \lfloor n \rfloor - i$. Otherwise all of the terms cancel and $s = 0$. In other words $t = \max\{\lceil m/e \rceil + \lfloor n \rfloor - i,0\},$ and 
\[
\phi/p^i \left( \mathrm{H}^1(\mathcal{N}^{\geq i} \mathrm{LW}\Omega_{S_{e,\{1\}}})_{(m, n)} \right) = p^{\max\{\lceil m/e \rceil + \lfloor n \rfloor - i,0\}}\mathrm{H}^1(\mathrm{LW}\Omega_{S_{e,\{1\}}})_{(pm, pn)}.
\] 


Now fix $n$, and an $m$ such that $p \nmid m$. Using the idea from \cite{Hesselholt-Madsen}, we consider the following map
\[
\bigoplus_{a \geq 0} \mathrm{H}^1 \left( \mathcal{N}^{\geq i} \mathrm{LW}\Omega_{S_{e,\{1\}}} \right)_{(p^am, p^an)} \xrightarrow{\phi/p^i - \mathrm{can}} \bigoplus_{a \geq 0} \mathrm{H}^1 \left(\mathrm{LW}\Omega_{S_{e,\{1\}}} \right)_{(p^am, p^an)}.
\]
This is not a graded map, but is still a filtered map. For low values of $a$ the divided Frobenius map is an isomorphism $(p^am, p^an) \mapsto (p^{a + 1}m, p^{a + 1} n)$, but for large enough values of $a$ the canonical map will become an isomorphism $(p^am, p^an) \mapsto (p^am, p^an)$. 
Let $s$ be the smallest natural number such that 
\[
\left\lceil \frac{p^s m}{e} \right\rceil + \left\lfloor p^s n \right\rfloor > i,
\]
which we denote $s(p, i, e, m, n)$, or sometimes for convenience just $s$ with the dependence on $p, i, e, m, n$ left implicit. This is then the smallest value of $a$ in the above when the divided Frobenius is no longer an isomorphism. We may then use this to compute the syntomic complexes.

\begin{thm} 
For $S_{e,\{1\}} = k[y^{1/p^{\infty}}, x]/(y, x^e)$, $k$ a perfect $\mathbb{F}_p$-algebra, there is an isomorphism
\[
\mathrm{H}^1(\mathbb{Z}_p(i)(S_{e,\{1\}})) \cong \bigoplus_{\substack{m \in I_p, \\  n\in \mathbb{N}[1/p]}} W(k)/\{p^{s}m, e\},
\]
where $I_p$ is the set of integers coprime to $p$, and $s = s(p, i, e, m, n)$ is as above. The higher cohomology vanishes. 
\end{thm}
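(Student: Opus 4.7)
I would apply the long exact sequence in cohomology to the fiber sequence
\[
\ZZ_p(i)(S_{e,\{1\}}) \to \mathcal{N}^{\geq i}\DD_{S_{e,\{1\}}} \xrightarrow{\phi/p^i - \mathrm{can}} \DD_{S_{e,\{1\}}}.
\]
The earlier computations show that both targets are concentrated in cohomological degrees $0$ and $1$, with $\mathrm{H}^0$ supported only in bidegrees $(0,n)$. Hence $\mathrm{H}^k(\ZZ_p(i)(S_{e,\{1\}}))$ vanishes for $k \leq -1$ and $k \geq 3$ automatically, and the problem reduces to computing the kernel and cokernel of $\phi/p^i - \mathrm{can}$ on $\mathrm{H}^0$ and $\mathrm{H}^1$. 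Because $\mathrm{can}$ preserves the $(m,n)$-bigrading while $\phi/p^i$ shifts it by $(m,n) \mapsto (pm,pn)$, the difference respects the decomposition of $\{(m,n) : m > 0\}$ into orbits of this shift, each uniquely parametrized by $(m_0, n_0) \in I_p \times \mathbb{N}[1/p]$.

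Next I would fix an orbit and analyze the restricted map
\[
\bigoplus_{a \geq 0} N_a \xrightarrow{\phi/p^i - \mathrm{can}} \bigoplus_{a \geq 0} M_a,
\]
where $N_a := \mathrm{H}^1(\mathcal{N}^{\geq i}\DD)_{(p^a m_0, p^a n_0)}$ and $M_a := \mathrm{H}^1(\DD)_{(p^a m_0, p^a n_0)}$. The threshold $s = s(p,i,e,m_0,n_0)$ splits the orbit into two regimes. For $a < s$, the formulas established just before the theorem give that $\phi/p^i : N_a \to M_{a+1}$ is an isomorphism (the $\max$-exponent is $0$), while $\mathrm{can} : N_a \to M_a$ is multiplication by $p^{i - \lceil p^a m_0/e\rceil - \lfloor p^a n_0\rfloor}$. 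For $a \geq s$, the Nygaard filtration coincides with the whole prismatic complex, so $N_a = M_a$ and $\mathrm{can}$ is the identity, while $\phi/p^i$ is divisible by a positive power of $p$.

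The final step is a Gaussian elimination along this staircase. The kernel equations $\phi/p^i(x_{a-1}) = \mathrm{can}(x_a)$ can be solved recursively in both directions: on the tail $a \geq s$ the identity canonical map determines $x_a$ from $x_{a-1}$, while on the head $a < s$ the invertible Frobenius determines $x_{a-1}$ from $x_a$. All the freedom is pinched at the transition $a = s$, and carrying through the quotient relations identifies the remaining parameter space with $W(k)/\{p^s m_0, e\}$ as claimed. The same staircase shows the cokernel is zero on every orbit, yielding $\mathrm{H}^k(\ZZ_p(i)(S_{e,\{1\}})) = 0$ for $k \geq 2$. For the $m = 0$ locus I would verify separately that $\phi/p^i - \mathrm{can}$ is surjective on $\mathrm{H}^0$, which is the known BMS identification of syntomic cohomology for the quasiregular semiperfect ring $k[y^{1/p^\infty}]/y$. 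I expect the main technical obstacle to be the careful bookkeeping of the orders $\{p^a m_0, e\}$ and $p$-adic valuations as $a$ crosses the threshold $s$, rather than any conceptually new input.
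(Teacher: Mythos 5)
Your plan follows the paper's own argument essentially step for step: reduce via the long exact sequence to the map $\phi/p^i-\mathrm{can}$ on $\mathrm{H}^1$, split off the $m=0$ locus using the BMS computation for $k[y^{1/p^\infty}]/y$, decompose into Frobenius orbits indexed by $(m,n)\in I_p\times\NN[1/p]$, and run a staircase argument around the threshold $s$ where the divided Frobenius is invertible below $s$ and the canonical map is the identity (with $p$-divisible Frobenius) above it, pinning the kernel to $W(k)/\{p^sm,e\}$ and killing the cokernel. This matches the paper's proof (including its Construction of the explicit kernel generator), so the proposal is correct and not a genuinely different route.
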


\begin{proof}
    By the above we know that with the exception of the $m=0$ terms the complexes $\mathcal{N}^{\geq i}\mathrm{LW}\Omega^*_{S_{e,\{1\}}}$ are concentrated in cohomological degree $1$ for all $i$. In addition the $m=0$ terms are exactly the terms coming from the inclusion $k[y^{1/p^\infty}]/y\to S_{e,\{1\}}$ which is split. From \cite[Lemma 8.19]{BMS2} we know that $\ZZ_p(i)(k[y^{1/p^\infty}]/y)$ is concentrated in degree $0$ by \cite[Lemma 8.19(1)]{BMS2} and so to compute $\mathrm{H}^1(\ZZ_p(i)(S_{e,\{1\}}))$ it is enough to compute the kernel (which is $\mathrm{H}^1$) and the cokernel (which is $\mathrm{H}^2$) of the maps \[\frac{\phi}{p^i}-can: \mathrm{H}^1(\mathcal{N}^{\geq i}\mathrm{LW}\Omega_{S_{e,\{1\}}})\to \mathrm{H}^1(\mathrm{LW}\Omega_{S_{e,\{1\}}})\] for all $i$. 

    We may write $\mathrm{H}^1(\mathcal{N}^{\geq i}\mathrm{LW}\Omega_{S_{e,\{1\}}})=\bigoplus_{m\in I_p}\mathrm{H}^1(\mathcal{N}^{\geq i}\mathrm{LW}\Omega_{S_{e,\{1\}}};m)$ where \[\mathrm{H}^1(\mathcal{N}^{\geq i}\mathrm{LW}\Omega_{S_{e,\{1\}}};m)=\bigoplus_{\substack{a\geq 0 \\ n\in \NN[1/p]}}\mathrm{H}^1(\mathcal{N}^{\geq i}\mathrm{LW}\Omega_{S_{e,\{1\}}})_{(p^am,n)}\] and the advantage of doing this is, as explained above, both the divided Frobenius and the canonical map will respect this this direct sum decomposition. Note that we may rewrite this as \[\mathrm{H}^1(\mathcal{N}^{\geq i}\mathrm{LW}\Omega_{S_{e,\{1\}}};m)=\bigoplus_{n\in \NN[1/p]}\mathrm{H}^1(\mathcal{N}^{\geq i}\mathrm{LW}\Omega_{S_{e,\{1\}}};m,n)\] with \[\mathrm{H}^1(\mathcal{N}^{\geq i}\mathrm{LW}\Omega_{S_{e,\{1\}}};m,n)=\bigoplus_{a\geq 0}\mathrm{H}^1(\mathcal{N}^{\geq i}\mathrm{LW}\Omega_{S_{e,\{1\}}})_{(p^am,p^an)}\] and that the Frobenius and canonical maps will decompose along this decomposition as well.\footnote{Note that when $a=0$ by assumption $p\nmid m$. In particular $\mathrm{H}^1(\mathcal{N}^{\geq i}\mathrm{LW}\Omega_{S_{e,\{1\}}})_{m,n}$ will not be the codomain of any divided Frobenius maps, so even though $n$ is still divisible by $p$ the Frobenius does not hit these terms.} Thus we need only determine the kernel and cokernel of these maps for fixed $m\in I_p$ and $n\in \NN[1/p]$, and we have already proven the direct sum decomposition appearing in the statement of the Theorem.

    We therefore need only consider $\mathrm{H}^1(\mathcal{N}^{\geq i}\mathrm{LW}\Omega_{S_{e,\{1\}}};m,n)$ for one fixed pair $(m,n)\in I_p\times \NN[1/p]$ at a time. To this end note that we have a commutative diagram
    \[
    \begin{tikzcd}
    \bigoplus\limits_{a\geq s}\mathrm{H}^1(\mathcal{N}^{\geq i}\DD_{S_{e,\{1\}}})_{(p^am, p^an)} \arrow[d, "\frac{\phi}{p^i}-can"] \arrow[r] & \mathrm{H}^1(\mathcal{N}^{\geq i}\DD_{S_{e,\{1\}}};m,n) \arrow[d, "\frac{\phi}{p^i}-can"] \arrow[r] & \bigoplus\limits_{0\leq a\leq s-1} \mathrm{H}^1(\mathcal{N}^{\geq 1}\DD_{S_{e,\{1\}}})_{(p^am, p^an)}\arrow[d] \\
    \bigoplus\limits_{a\geq s}\mathrm{H}^1(\DD_{S_{e,\{1\}}})_{(p^am, p^an)}  \arrow[r] & \mathrm{H}^1(\DD_{S_{e,\{1\}}};m,n) \arrow[r] & \bigoplus\limits_{0\leq a\leq s-1} \mathrm{H}^1(\DD_{S_{e,\{1\}}})_{(p^am, p^an)}
    \end{tikzcd}
    \] where the horizontal sequences are short exact sequences. First note that when $a\geq s$ the map $can$ is in fact the identity. In addition by the above discussion we have that the divided Frobenius map has $p$-divisible image for $a\geq s$, so the map on the left is an equivalence after $p$-completion since it is the identity mod $p$.

    Thus we need only compute the kernel and cokernel of the map \[\bigoplus\limits_{0\leq a\leq s-1} \mathrm{H}^1(\mathcal{N}^{\geq i}\DD_{S_{e,\{1\}}})_{(p^am,p^an)}\to \bigoplus\limits_{0\leq a\leq s-1} \mathrm{H}^1(\DD_{S_{e,\{1\}}})_{(p^am,p^an)}\] and for $a\neq s-1$ we know from the above that the divided Frobenius map is an equivalence in these degrees. In addition in these degrees $can$ is $p$-divisible by the above discussion and the $a=0$ summand in the target vanishes, so mod $p$ this map is surjective. This is a finite sum of $p$-complete groups and therefore is $p$-complete, and so this map is surjective before going mod $p$. 

    To compute the kernel of this map, consider the projection \[\bigoplus\limits_{0\leq a\leq s-1} \mathrm{H}^1(\mathcal{N}^{\geq i}\DD_{S_{e,\{1\}}})_{(p^am,p^an)}\xrightarrow{p_{s-1}} \mathrm{H}^1(\mathcal{N}^{\geq i}\DD_{S_{e, \{1\}}})_{(p^{s-1}m, p^{s-1}n)}\] which will induce an isomorphism after precomposing with the inclusion of the kernel. Let $(x_{s-1},\ldots, x_0)\in \ker(\frac{\phi}{p^i}-can)\cap \ker(p_s)$. Then $x_{s-1}=0$, but then $\phi/p^i(x_{s-2})=\phi/p^i(x_{s-2})-can(x_{s-1})=0$ and $\phi/p^i$ is an isomorphism in these degrees so $x_{s-2}=0$. Inductively $x_a=0$ for all $a$ and so the map \[\ker(\frac{\phi}{p^i}-can)\to \mathrm{H}^1(\mathcal{N}^{\geq i}\DD_{S_{e, \{1\}}})_{(p^{s-1}m, p^{s-1}n)}\] is injective.

    In order to show that this map is surjective we must build a representative of the generator of \[\mathrm{H}^1(\mathcal{N}^{\geq i}\DD_{S_{e,\{1\}}})_{(p^{s-1}m, p^{s-1}n)}\cong W(k)/\{p^sm,e\}\] in $\bigoplus_{0\leq a\leq s-1}\mathrm{H}^{1}(\mathcal{N}^{\geq 1}\DD_{S_{e,\{1\}}})_{(p^am,p^an)}$ which lies in the kernel of $\frac{\phi}{p^i}-can$. This identification is crucial for the next part of this paper, so we will record this identification seperately as Construction~\ref{con: generator of the kernel}. This completes the proof.
\end{proof}

These same methods can be applied to the quasiregular semiperfect rings of the form 
\[
S_{e,T} = k[y_t^{1/p^{\infty}}, x|t\in T]/(y_t, x^e)
\]
with $T$ some indexing set. To simplify things going forward we will use the multi-index notation $\alpha = (n_t|t\in T)$ where we always assume that there are only finitely many entries in $\alpha$ which are nonzero. This lets us write 
\[
\frac{y^{\alpha}}{\lfloor \alpha \rfloor !} := \prod_{t\in T}\frac{y_t^{n_t}}{\lfloor n_t\rfloor!}
\]
which is well defined because of the assumption that $\alpha$ has only finitely many nonzero entries. 

With this new notation in hand we have 
\[
\mathrm{LW}\Omega_{S_{e,T}} \simeq \bigoplus_{m, \alpha} W(k)\left\langle \frac{y^{\alpha}}{\lfloor \alpha \rfloor !}\frac{x^m}{\lfloor m/e \rfloor !} \right\rangle \xrightarrow{\ \ d \ \ } \bigoplus_{m, \alpha} W(k)\left\langle \frac{y^{\alpha}}{\lfloor \alpha \rfloor !}\frac{x^m}{\lfloor (m-1)/e \rfloor !} \right\rangle,
\]
where $\alpha$ ranges over all $T$-indexed multi-indices of elements in $\mathbb{N}[1/p]$. 

Just as before we get identifications
\[
\mathrm{H}^1(\mathrm{LW}\Omega_{S_{e,T}})_{(m, \alpha)} = W(k) / \{ m, e \} \left\langle \frac{y^{\alpha}}{\lfloor \alpha \rfloor !} \frac{x^m}{\lfloor (m-1)/e\rfloor!}\mathrm{dlog}x\right\rangle
\]
and
\begin{equation}\label{eqn: explicit generators}
\mathrm{H}^1(\mathcal{N}^{\geq i} \mathrm{LW}\Omega_{S_{e,T}})_{(m, \alpha)} = \begin{cases}
W(k) / p^{\lceil m/e\rceil-\lfloor m/e\rfloor} \{ m, e \} \left\langle p^{i - \lceil m/e \rceil - \lfloor \alpha \rfloor_{\ell_1}} \frac{y^{\alpha}}{\floor* \alpha!} \frac{x^m}{\lfloor (m-1)/e\rfloor!}\mathrm{dlog}x\right\rangle & \textrm{ if } i\geq \lceil \frac{m}{e}\rceil+\lfloor \alpha \rfloor_{\ell_1}\\
W(k)/\{m,e\}\left\langle\frac{y^{\alpha}}{\lfloor \alpha \rfloor !}\frac{x^m}{\lfloor (m-1)/e\rfloor!}\mathrm{dlog}x \right\rangle & \textrm{otherwise}
\end{cases},
\end{equation}
where $\lfloor \alpha \rfloor_{\ell_1} := \sum_{t\in T}\lfloor n_t \rfloor$ is the $\ell_1$ norm of $\lfloor \alpha \rfloor$.

Let $s(p,i,e, m, \alpha)$ be the minimal value of $a$ such that the divided Frobenius map is no longer an isomorphism between degrees $(p^am, p^a \alpha) \mapsto (p^{a+1}m, p^{a+1}\alpha)$, ie the smallest value such that \[\lceil \frac{p^sm}{e}\rceil +\lfloor p^s\alpha \rfloor_{\ell_1} > i\] which is always bounded above by the value of $s(p,i,e,m,0)$.

\begin{thm}\label{thm: B in text}
For $S_{e,T} = k[ y_t^{1/p^{\infty}}, x|t\in T]/(y_t, x^e)$, $k$ a perfect $\mathbb{F}_p$-algebra, there are isomorphisms
\[\mathrm{H}^0(\ZZ_p(i)(S_{e,T}))\cong \DD_{k[y_t^{1/p^{\infty}}]/(y_t)},\]
\[
\mathrm{H}^1(\mathbb{Z}_p(i)(S_{e,T})) \cong \bigoplus_{\substack{m \in I_p,  \\ \alpha \in \mathbb{Z}[1/p]^{\oplus T}}} W(k)/\{p^{s}m, e\},
\]
where $I_p$ is the set of integers coprime to $p$, $s = s(p, i, e, m, \alpha)$, and all the other cohomology groups vanish.
\end{thm}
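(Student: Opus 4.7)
The plan is to follow the proof given above for $T=\{1\}$ verbatim, replacing the single index $n\in\mathbb{N}[1/p]$ by the multi-index $\alpha\in\mathbb{N}[1/p]^{\oplus T}$. All the computational scaffolding is already in place: the symmetric-monoidal decomposition of $\mathrm{LW}\Omega_{S_{e,T}}$, the bigraded formulas \eqref{eqn: explicit generators} for $\mathrm{H}^1(\mathcal{N}^{\geq i}\mathrm{LW}\Omega_{S_{e,T}})_{(m,\alpha)}$ and the analogous formula for $\mathrm{H}^1(\mathrm{LW}\Omega_{S_{e,T}})_{(m,\alpha)}$, and the descriptions of the divided Frobenius (which sends $(m,\alpha)\mapsto(pm,p\alpha)$ by the same Legendre-type valuation computation, using $\lfloor\alpha\rfloor_{\ell_1}$ in place of $\lfloor n\rfloor$) and canonical maps.

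First I would verify that $\mathrm{H}^1(\mathcal{N}^{\geq i}\mathrm{LW}\Omega_{S_{e,T}})$ splits into Frobenius orbits indexed by pairs $(m,\alpha)\in I_p\times\mathbb{N}[1/p]^{\oplus T}$. Since $m\in I_p$ cannot be divided by $p$, two forward orbits $\{(p^a m,p^a\alpha):a\geq 0\}$ and $\{(p^a m',p^a\alpha'):a\geq 0\}$ are either equal or disjoint (compare the $p$-adic valuation of the first coordinate), and together they exhaust the $m>0$ bigraded pieces. The divided Frobenius and canonical maps respect this decomposition, so it suffices to compute the kernel and cokernel of $\phi/p^i-\mathrm{can}$ on each orbit separately.

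Second, for each orbit I would run the head/tail argument with $s=s(p,i,e,m,\alpha)$. On the tail $a\geq s$ the canonical map is the identity and $\phi/p^i$ has $p$-divisible image, so $\phi/p^i-\mathrm{can}$ is an equivalence after $p$-completion. On the finite head $0\leq a\leq s-1$, surjectivity follows exactly as in the $T=\{1\}$ proof: $\phi/p^i$ is an isomorphism from $a$ to $a+1$ for $0\leq a\leq s-2$, and the mod-$p$ argument together with $p$-completeness of the finite direct sum forces surjectivity. Injectivity follows by projecting to the top component $(p^{s-1}m,p^{s-1}\alpha)$ and solving backward, using the invertibility of $\phi/p^i$ in the lower degrees. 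The explicit formula \eqref{eqn: explicit generators} evaluated at the top component identifies the kernel with $W(k)/\{p^s m,e\}$, and Construction~\ref{con: generator of the kernel} extends to multi-indices without change, since $\alpha$ plays a purely passive role under Frobenius (its components are all scaled uniformly by $p$).

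Finally, the $\mathrm{H}^0$ piece comes from the $m=0$ subcomplex of $\mathcal{N}^{\geq *}\mathrm{LW}\Omega_{S_{e,T}}$, which equals $\mathcal{N}^{\geq *}\mathrm{LW}\Omega_{S_T}$ for $S_T=k[y_t^{1/p^\infty}\mid t\in T]/(y_t)$ and is split off by the retraction $S_{e,T}\to S_T$ sending $x\mapsto 0$. By \cite[Lemma 8.19]{BMS2}, $\mathbb{Z}_p(i)(S_T)$ is concentrated in degree $0$ and computes $\DD_{S_T}$, yielding the stated $\mathrm{H}^0$. The main obstacle is largely notational bookkeeping with the multi-index; however, since every $\alpha$ has finite support, $\lfloor\alpha\rfloor_{\ell_1}$ is a finite sum and every numerical argument from the $T=\{1\}$ proof carries over unchanged, even when $T$ is infinite.
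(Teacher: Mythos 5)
Your proposal is correct and matches the paper's approach exactly: the paper's own proof of Theorem~\ref{thm: B in text} simply states that the argument is the same as in the $T=\{1\}$ case, which is precisely the verbatim multi-index translation you carry out (orbit decomposition by $(m,\alpha)$, the head/tail argument with $s(p,i,e,m,\alpha)$, the extension of Construction~\ref{con: generator of the kernel}, and the $m=0$ split piece handled via \cite[Lemma 8.19]{BMS2}). In fact you supply more detail than the paper does, but there is no substantive difference in method.
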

\begin{proof}
    The proof is the same as in the $T=\{1\}$ case. 
\end{proof}

\begin{con}\label{con: generator of the kernel}
We will now construct the generator of \[\ker(\frac{\phi}{p^i}-can)_{(m,\alpha)}\subseteq \bigoplus_{0\leq a\leq s-1}\mathrm{H}^1(\mathcal{N}^{\geq i}\DD_{S_{e,T}})_{(p^am,p^a\alpha)}.\] Let \[g_{s-1}\in \mathrm{H}^1(\mathcal{N}^{\geq i}\DD_{S_{e,T}})_{(p^{s-1}m,p^{s-1}\alpha)}\cong W(k)/\{p^sm,e\} \left\langle p^{i - \lceil m/e \rceil - \lfloor \alpha \rfloor_{\ell_1}} \frac{y^{\alpha}}{\floor* \alpha!} \frac{x^m}{\lfloor (m-1)/e\rfloor!}\mathrm{dlog}x\right\rangle\] be the generator \[g_{s-1}=p^{i - \lceil m/e \rceil - \lfloor \alpha \rfloor_{\ell_1}} \frac{y^{\alpha}}{\floor* \alpha!} \frac{x^m}{\lfloor (m-1)/e\rfloor!}\mathrm{dlog}x\] as an element of the above group. Then for all $a< s-1$ we have that $\phi/p^i: \mathrm{H}^1(\mathcal{N}^{\geq i}\DD_{S_{e,T}})_{(p^am,p^a\alpha)}\to \mathrm{H}^1(\DD_{S_{e,T}})_{(p^{a+1}m,p^{a+1}\alpha)}$ is an equivalence and so there is some element $h_{s-2}\in \mathrm{H}^1(\mathcal{N}^{\geq 1}\DD_{S_{e,T}})_{(p^{s-2}m, p^{s-2}\alpha)}$ such that \[\phi(h_{s-2})/p^i=-can(g_{s-1})=-p^{i-\lceil p^{s-1}m/e\rceil -\lfloor p^{s-1}\alpha\rfloor_{\ell_1}}\frac{y^{p^{s-1}\alpha}}{\lfloor p^{s-1}\alpha \rfloor!}\frac{x^{p^{s-1}m}}{\lfloor (p^{s-1}m-1)/e\rfloor!}\mathrm{dlog}x\] and so unwinding the formulas gives $h_{s-2}=-p^{i-\lceil p^{s-1}m/e\rceil -\lfloor p^{s-1}\alpha\rfloor_{\ell_1}}g_{s-2}$ where $g_{s-2}$ is the indicated generator of $\mathrm{H}^1(\mathcal{N}^{\geq 1}\DD_{S_{e,T}})_{(p^{s-2}m, p^{s-2}\alpha)}$. Inductively we then have a sequence $(g_{s-1}, h_{s-2},\ldots, h_0)$ such that $\phi(h_a)/p^i=-can(h_{a+1})$ and the formula \[h_a=p^{\sum_{j=0}^{s-1-a}(i-\lceil p^{s-1-j}m/e \rceil-\lfloor p^{s-1-j}\alpha)\rfloor_{\ell_1}}g_a\] up to a unit where $g_a$ is the canonical generator of $\mathrm{H}^1(\mathcal{N}^{\geq 1}\DD_{S_{e,T}})_{(p^{a}m, p^{a}\alpha)}$. We will absorb this unit into $g_a$ by picking a different generator, it will not make a difference going forward.

Note that $\mathrm{H}^1(\DD_{S_{e,T}})_{m,\alpha}\cong W(k)/\{m,e\}=0$ when $m\in I_p$, and so $can(h_0)=0$. It follows that \[(g_{s-1},\ldots, p^{\sum_{j=0}^{s-1}(i-\lceil p^jm/e\rceil -\lfloor p^j\alpha\rfloor_{\ell_1})}g_0)\in \ker(\frac{\phi}{p^i}-can)\subseteq \bigoplus\limits_{0\leq a\leq s-1}\mathrm{H}^1(\mathcal{N}^{\geq i}\DD_{S_{e,T}})_{(p^am,p^a\alpha)}\] and since the map $\ker(\frac{\phi}{p^i}-can)\to \mathrm{H}^1(\mathcal{N}^{\geq i}\DD_{S_{e,T}})_{(p^{s-1}m,p^{s-1}\alpha)}$ is injective and the above element is mapped to a generator the above element is then a generator for the kernel.
\end{con}

\section{The topological restriction homology of the prototype quasiregular semiperfect rings}
In this section we compute topological restriction homology for the prototype quasiregular semiperfect rings. We will do this in two steps. The first step, which we have largely done already, will be to compute the relative topological cyclic homology of truncated polynomial algebras. The second step will be to compute the limit of these computations and apply \cite[Theorem A]{Hesselholt_curves}.

Recall the notation \[S_T:= k[y_t^{1/p^\infty}|t\in T]/(y_t);\hspace{2em} S_{e,T}:=S_T[x]/x^e\] where $k$ is some perfect $\FF_p$-algebra, $T$ is some indexing set, and $e\geq 0$ is some natural number. Then by \cite[Theorem 1.12(5)]{BMS2} there are spectral sequences of the form \[E_2^{i,j}=\mathrm{H}^{i-j}(\ZZ_p(-j)(S_T))\implies \pi_{-i-j}(\tc(S_T;\ZZ_p))\] and \[E^{i,j}_2=\mathrm{H}^{i-j}(\ZZ_p(-j)(S_{e,T}))\implies \pi_{-i-j}(\tc(S_{e,T};\ZZ_p))\] which are functorial in their input. In particular since there is a retraction $S_T\to S_{e,T}\to S_T$ we have that the second spectral sequence above decomposes into a direct sum of the first spectral sequence and a spectral sequence of the form
\[
E_2^{i, j} = \mathrm{H}^{i - j}(\widetilde{\mathbb{Z}_p}(-j)(S_{e,T})) \implies \pi_{-i -j}\widetilde{\mathrm{TC}}(S_{e,T}; \mathbb{Z}_p)
\]
where $\widetilde{\ZZ_p}(-j)(S_{e,T}):= \operatorname{cofib}(\ZZ_p(-j)(S_{T})\to \ZZ_p(-j)(S_{e,T}))$ and similarly for $\widetilde{\tc}(S_{e,T};\ZZ_p)$.
Since $\widetilde{\mathbb{Z}_p}(i)(S_{e,T})$ is concentrated in degree $1$ by Theorem~\ref{thm: B in text}, this spectral sequence degenerates and we get
\[
\widetilde{\mathrm{TC}}_{2i - 1}(S_{e,T};\ZZ_p) \cong \mathrm{H}^1(\mathbb{Z}_p(i)(S_{e,T}))
\]
and the even homotopy groups vanish.

From the Dundas-Goodwillie-McCarthy theorem \cite{Dundas_Goodwillie_McCarthy} we have 
\[
K_{*}(S_{e,T}, (x);\ZZ_p) \cong \widetilde{\mathrm{TC}}_{*}(S_{e,T};\ZZ_p)
\] and in fact we do not need to $p$-adically complete the $K$-theory side of this equivalence since $K_*(S_{e,T},(x))$ is bounded $p$-power torsion. This follows from \cite[Theorem D]{Land_Tamme} by taking $N=p$\footnote{For $A$ a ring and $I\subseteq A$ a two sided ideal, both notations $K(A,I)$ and $K(A, A/I)$ are used in the literature to mean the fiber of the map $K(A)\to K(A/I)$. We use the first in this paper and \cite{Land_Tamme} use the latter, but they mean the same thing.}.

We now would like to use the description of $\mathrm{TR}$ due to Hesselholt \cite[Theorem A]{Hesselholt_curves} and generalized by McCandless \cite{McCandless_curves}
\[
\mathrm{TR}(S_{T};\ZZ_p) \simeq \varprojlim_{e} \Omega K(S_{e,T}, (x))
\]
to understand $\mathrm{TR}(S_{T};\ZZ_p)$. Note that we mean the $p$-completion of the integral topological restriction homology here, ie $\tr(A;\ZZ_p)=\lim_{n}\thh(R;\ZZ_p)^{C_n}$ where the maps in $\mathbb{N}$ are given by divisibility. This is in contrast to the $p$-typical topological restriction homology which is our goal, namely $\tr(A)=\lim_{n}\thh(A)^{C_{p^n}}$. These are connected by the natural equivalence \[\tr(A;\ZZ_p)\simeq \prod_{m\in I_p}\tr(A)\] and so we do not lose any information when studying one versus the other. In particular we will show that $\tr(S_{e,T};\ZZ_p)$ is even which will imply that $\tr(S_{e,T})$ is also even. 

In order to show that $\tr(S_{T})$ is even, since we have already shown that $K(S_{e,T},(x))$ is concentrated in odd degrees it is then enough to show that each of the pro-systems $\{K_{2i-1}(S_{e,T},(x);\ZZ_p)\}_{e\in \mathbb{N}}$ is Mittag-Leffler. To this end we first determine what the maps in these pro-systems are. Since $\mathbb{N}\setminus p\NN$ is cofinal in $\mathbb{N}$ we will only need to consider the case where $p\nmid e$.

\begin{lem}
    Let $i\geq 0$, $e,f\in I_p$, and $f\geq e$ all be integers. Let $T$ be any set. Then the maps \[tr_{fe}:K_{2i-1}(S_{f,T},(x))\cong \bigoplus_{\substack{m\in I_p \\ \alpha\in \NN[1/p]^{\oplus T}}}W(k)/p^{s(p,i,f,m,\alpha)}\to K_{2i-1}(S_{e,T},(x))\cong \bigoplus_{\substack{m\in I_p\\ \alpha \in \NN[1/p]^{\oplus T}}}W(k)/p^{s(p,i,e,m,\alpha)}\] are all graded, and in bidegree $(m,\alpha)$ these maps are given by multiplication by 
    \begin{equation}\label{eqn: explicit formula for trfe}
    \frac{\lfloor (p^{s(p,i,e,m,\alpha)-1}m-1)/e\rfloor!}{\lfloor (p^{s(p,i,e,m,\alpha)-1}m-1)/f \rfloor!}p^{\lceil p^{s(p,i,e,m,\alpha)-1}m/e \rceil - \lceil p^{s(p,i,e,m,\alpha)-1}m/f \rceil  +\sum\limits_{j=s(p,i,e,m,\alpha)-1}^{s(p,i,f,m,\alpha)-1}(i-\lceil p^jm/f\rceil -\lfloor p^j\alpha \rfloor_{\ell_1})}
\end{equation}
\end{lem}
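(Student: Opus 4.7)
The plan is to identify the transition map $tr_{fe}$ as the map on relative $K$-theory induced by the quotient surjection $S_{f,T} \twoheadrightarrow S_{e,T}$, which is well-defined because $f \geq e$ ensures $x^f \in (x^e)$ in $S_T[x]$. By naturality of the Dundas--Goodwillie--McCarthy equivalence and the degeneration of the motivic spectral sequence established prior to the lemma, this corresponds to the map $H^1(\ZZ_p(i)(S_{f,T})) \to H^1(\ZZ_p(i)(S_{e,T}))$ coming from the induced map on syntomic complexes. The task thus reduces to computing this latter map in each bidegree $(m,\alpha)$ via the explicit kernel generators of Construction~\ref{con: generator of the kernel}.

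First I would work out the pointwise effect on the Nygaard-filtered prismatic complexes. In a fixed bidegree $(M, N)$ satisfying $i \geq \lceil M/e \rceil + \lfloor N \rfloor_{\ell_1}$, the quotient fixes the expression $\tfrac{y^N}{\lfloor N\rfloor !} \tfrac{x^M}{\lfloor (M-1)/f\rfloor !} \mathrm{dlog}\, x$; rewriting it in the target's natural divided power basis produces the factorial ratio $\tfrac{\lfloor(M-1)/e\rfloor!}{\lfloor(M-1)/f\rfloor!}$. Comparing the Nygaard-shift prefactors $p^{i-\lceil M/f\rceil-\lfloor N\rfloor_{\ell_1}}$ and $p^{i-\lceil M/e\rceil-\lfloor N\rfloor_{\ell_1}}$ attached to the natural source and target generators introduces an additional factor of $p^{\lceil M/e\rceil - \lceil M/f\rceil}$. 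Specializing to $(M, N) = (p^{s-1}m, p^{s-1}\alpha)$ with $s := s(p,i,e,m,\alpha)$ accounts for the factorial ratio and the first power-of-$p$ term in equation~\eqref{eqn: explicit formula for trfe}.

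Second I would use the projection argument from the proof of Theorem~\ref{thm: B in text}: projection onto the top bidegree $(p^{s-1}m, p^{s-1}\alpha)$ identifies the $(m,\alpha)$-block of the target's $\ker(\phi/p^i - \mathrm{can})$ with $H^1(\mathcal{N}^{\geq i}\DD_{S_{e,T}})_{(p^{s-1}m, p^{s-1}\alpha)}$. Setting $s' := s(p,i,f,m,\alpha) \geq s$ (since $e \leq f$), the source kernel generator has components at bidegrees $(p^a m, p^a\alpha)$ for $0 \leq a \leq s'-1$, and its component at the intermediate $a = s-1$ is obtained by iterating the recursion $\phi(h^f_a)/p^i = -\mathrm{can}(h^f_{a+1})$ of Construction~\ref{con: generator of the kernel} downward from $h^f_{s'-1} = g^f_{s'-1}$ through $s'-s$ steps. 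This produces, up to a unit, the factor $p^{\sum_{j=s-1}^{s'-1}(i-\lceil p^jm/f\rceil - \lfloor p^j\alpha\rfloor_{\ell_1})}$ times the natural generator $g^f_{s-1}$. Applying the pointwise map from the first step and reading off the coefficient of the target's natural generator $g^e_{s-1}$ then yields the product in~\eqref{eqn: explicit formula for trfe}.

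I expect the main obstacle to be purely bookkeeping; the conceptual content is already present in Construction~\ref{con: generator of the kernel} and the proof of Theorem~\ref{thm: B in text}. The one subtlety requiring care is that the image $tr_{fe}(\xi)$ of the source kernel generator may acquire nonzero components at bidegrees $(p^a m, p^a \alpha)$ with $s \leq a \leq s'-1$; these components can be eliminated modulo the image of $\phi/p^i - \mathrm{can}$, which is a $p$-complete equivalence on the $a \geq s$ summand, and such modifications do not affect the component at $a = s-1$ because $\phi/p^i$ raises $a$ by one while $\mathrm{can}$ preserves $a$.
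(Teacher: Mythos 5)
Your proposal is correct in outline and follows essentially the same route as the paper: the paper likewise reduces to the bidegree-wise map on $\mathrm{H}^1(\mathcal{N}^{\geq i}\DD)$ (phrased there as injectivity of $K_{2i-1}\to\tc^-_{2i-1}$ together with the explicit generators of Equation~\ref{eqn: explicit generators}, which yields exactly your factorial ratio and the factor $p^{\lceil p^{s-1}m/e\rceil-\lceil p^{s-1}m/f\rceil}$), and then evaluates on the kernel generators of Construction~\ref{con: generator of the kernel} just as in your projection-to-top-bidegree step. The only point to double-check when writing up is the indexing of the exponent sum: descending the recursion of Construction~\ref{con: generator of the kernel} from $a=s(p,i,f,m,\alpha)-1$ down to $a=s(p,i,e,m,\alpha)-1$ contributes one factor per step, which gives $\sum_{j=s(p,i,e,m,\alpha)}^{s(p,i,f,m,\alpha)-1}$ rather than a sum starting at $j=s(p,i,e,m,\alpha)-1$; the same indexing question is already present in the closed formula at the end of Construction~\ref{con: generator of the kernel}, and it does not affect the subsequent Mittag--Leffler argument.
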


\begin{proof}
    We already have that the map $K_{2i-1}(S_{f,T},(x))\to \tc^{-}_{2i-1}(S_{f,T},(x))$ is injective for all $f$, so it is enough to determine a formula for the maps $\tc^{-}_{2i-1}(S_{f,T},(x))\to \tc^{-}_{2i-1}(S_{e,T},(x))$. This map is then a graded map and in bidegree $(p^km,\alpha)$ the map is given by multiplication by \[
\frac{\lfloor p^km-1 /e\rfloor!}{\lfloor p^km-1/f \rfloor!}p^{\max\{i - \lceil p^km/f \rceil - \lfloor p^k n \rfloor,0\} - \max\{i - \lceil p^km/e \rceil - \lfloor p^k n \rfloor,0\}}
\] as can be seen by using the explicit generators constructed in Equation~\ref{eqn: explicit generators}. The result then follows by the formula for the generators of $K_{2i-1}(S_{f,T},(x))$ and $K(S_{e,T},(x))$ constructed in Construction~\ref{con: generator of the kernel}.
\end{proof}

\subsection{\texorpdfstring{Evenness of $\tr(k[y_t^{1/p^\infty}|t\in T]/(y_t))$}{Evenness of prototypes}}

Now we would like to compute $\tr_{2i-1}(k[y_t^{1/p^\infty}|t\in T]/(y_t))$ by using the identifications 
\[
\tr_{2i-1}(k[y_t^{1/p^\infty}|t\in T]/(y_t)) \simeq \varprojlim_e \mathrm{H}^1\mathbb{Z}_p(i)\left(k[y_{t\:,}^{1/p^\infty} x|t\in T]/(y_t, x))\right).
\]

This case is slightly more difficult than that of $\mathrm{TR}(\mathbb{F}_p)$, because it is no longer the case that $s(e) \to \infty$ as $e \to \infty$, meaning the transition maps are non-trivial in some degrees.

\begin{lem}
For all $i\in \NN$, the limit diagram 
\[
\varprojlim_e \mathrm{H}^1\mathbb{Z}_p(i)\left(k[y_{t\:,}^{1/p^\infty} x|t\in T]/(y_t, x^e))\right),
\]
is Mittag-Leffler.
\end{lem}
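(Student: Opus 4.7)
The plan is to combine the explicit transition formula~\eqref{eqn: explicit formula for trfe} from the preceding lemma with the $(m,\alpha)$-graded decomposition of $H^1(\ZZ_p(i)(S_{e,T}))$ from Theorem~\ref{thm: B in text}. Because $tr_{fe}$ is graded, the image splits as $\mathrm{im}(tr_{fe})=\bigoplus_{(m,\alpha)}\mathrm{im}_{(m,\alpha)}(tr_{fe})$, and each summand is the cyclic submodule $p^{v_p(c_{fe}(m,\alpha))}W(k)/\{p^{s(e,m,\alpha)}m,e\}$ determined by the $p$-adic valuation of the scalar $c_{fe}(m,\alpha)\in W(k)$ appearing in~\eqref{eqn: explicit formula for trfe}. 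The Mittag-Leffler condition then reduces to producing, for each $e$, a single $f_0\geq e$ that simultaneously stabilizes all these valuations.

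First I would record two easy reductions. The set of $m\in I_p$ with $s(e,m,\alpha)\geq 1$ is finite, since this condition forces $\lceil m/e\rceil\leq i$ and hence $m\leq ei$. Also, $v_p(c_{fe}(m,\alpha))$ is weakly increasing in $f$, as each summand in~\eqref{eqn: explicit formula for trfe} is non-negative by definition of $s(f,m,\alpha)$ and new terms only accumulate as $s(f)$ grows; so $\mathrm{im}_{(m,\alpha)}(tr_{fe})$ is a decreasing chain of submodules in $f$. I would then establish pointwise stabilization, splitting on whether $\alpha=0$: if $\alpha=0$ then $s(f,m,0)\to\infty$ and unboundedly many non-negative terms accumulate in the sum, forcing $v_p(c_{fe}(m,0))\to\infty$ and the image to eventually vanish; if $\alpha\neq 0$ then $s(f,m,\alpha)$ is bounded above by $s_\infty(m,\alpha):=\min\{s:\lfloor p^s\alpha\rfloor_{\ell_1}\geq i\}<\infty$, so both the number of sum terms and each individual term stabilize once $f$ is large enough.

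The main obstacle is promoting this pointwise stabilization to a uniform one, since $s_\infty(m,\alpha)$ can be arbitrarily large as $\alpha$ varies (take $\alpha=(1/p^N,0,\ldots)$ with $N\to\infty$). The key input I expect to use is that the image at $(m,\alpha)$ is already zero as soon as $v_p(c_{fe}(m,\alpha))\geq s(e,m,\alpha)$, and the latter is uniformly bounded by $s(e,1,0)$. Combining monotonicity with a careful reading of~\eqref{eqn: explicit formula for trfe}, I expect to finish by choosing $f_0$ of the order of $p^{s(e,1,0)}\cdot ei$: beyond this threshold, the ceiling $\lceil p^jm/f\rceil$ equals $1$ for every index $j\leq s(e,1,0)-1$ and every relevant $m$, so the ``low'' part of the sum (indices $j\leq s(e,1,0)-1$) has reached its stable value $i-1-\lfloor p^j\alpha\rfloor_{\ell_1}$; any contribution from larger $j$ can only further increase the valuation, so $v_p(c_{f_0,e}(m,\alpha))$ must either have already met $s(e,m,\alpha)$ (image zero) or equal the eventual value $v_p(c_{\infty,e}(m,\alpha))$ (image stable). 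This yields uniform stabilization across all $(m,\alpha)$ and hence the Mittag-Leffler condition.
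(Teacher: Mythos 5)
Your overall architecture is the same as the paper's: decompose $\mathrm{H}^1(\ZZ_p(i)(S_{e,T}))$ by $(m,\alpha)$, observe that only finitely many $m\leq ei$ contribute for fixed $e$, reduce the Mittag--Leffler condition to stabilization in $f$ of the $p$-adic valuation of the scalar in Equation~\eqref{eqn: explicit formula for trfe}, bound $s(p,i,e,m,\alpha)$ uniformly by its value at $\alpha=0$, and pick one large threshold $f_0$ (of essentially the same size as the paper's, which requires $\lceil p^{2s(p,i,e,m)}m/f\rceil=1$) so that the relevant ceilings become $1$. Those reductions, and the monotonicity of the valuation in $f$, are fine.

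The gap is in your final dichotomy. From ``contributions from larger $j$ can only increase the valuation'' it does not follow that at $f_0$ the valuation has either already reached $s(p,i,e,m,\alpha)$ or already reached its eventual value: a priori the valuation at $f_0$ could be strictly less than $s(p,i,e,m,\alpha)$ (image nonzero) and then strictly increase for $f'>f_0$, because when the top of the summation range $s(p,i,e,m,\alpha)-1\leq j\leq s(p,i,f_0,m,\alpha)-1$ reaches indices where $\lceil p^jm/f_0\rceil\geq 2$, enlarging $f$ both lengthens the range (as $s(p,i,f,m,\alpha)$ grows) and increases individual terms (the ceilings drop) --- and then the image would not have stabilized at your $f_0$. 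What is needed is a quantitative statement that whenever such later growth is still possible, the ceiling-one portion of the sum already forces the valuation to be at least $s(p,i,e,m,\alpha)$, so that the map is already zero on that summand and remains zero. The missing ingredient is that the terms $i-1-\lfloor p^j\alpha\rfloor_{\ell_1}$ in the ceiling-one regime cannot vanish except at the very last index of the range: if $\lfloor p^j\alpha\rfloor_{\ell_1}=i-1$ then $\lfloor p^{j+1}\alpha\rfloor_{\ell_1}\geq p(i-1)\geq i$, which terminates the range at $j$; only with this (or an equivalent count, such as the paper's case distinction $s(p,i,f,m,\alpha)\geq 2s(p,i,e,m)$ versus $s(p,i,f,m,\alpha)<2s(p,i,e,m)$, where in the first case the sum has at least $s(p,i,e,m)$ positive terms and in the second nothing depends on $f'$) does your dichotomy become a proof. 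A related conflation of ``non-negative'' with ``positive'' appears in your pointwise $\alpha=0$ case, where accumulating non-negative terms alone does not force $v_p\to\infty$ (for $i=1$ all terms vanish, though there the coefficient is constant anyway). So the approach is the right one and the threshold is of the right order, but the decisive estimate is asserted rather than proved.
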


\begin{proof}
Fix an $e \in \mathbb{N}$, we need to show that for some large enough value of $f,$ we have $\mathrm{im}(tr_{fe}) = \mathrm{im}(tr_{f'e})$ for all $f' \geq f$, i.e. the image of the transition maps stabilizes. For fixed $e$, there are only finitely many values of $m$ such that $\mathrm{H}^1\mathbb{Z}_p(i)(k[y_t^{1/{p^{\infty}}}, x|t\in T]/(y_t, x^e))_{(m, \alpha)} \neq 0$ for some $\alpha$. This lets us reduce to the case where there is only a single value of $m$. 

Note first that since $s(p,i,e,m,\alpha)$ is monotone increasing in $e$ we have that $s(p,i,f,m,\alpha)\geq s(p,i,e,m,\alpha)$. In particular the image of the maps $tr_{fe}$ is completely determined by how $p$-divisible the number in Equation~\ref{eqn: explicit formula for trfe} is. In particular it is enough to show that this number stabilizes for large enough $f$ uniformly in $\alpha$. 

In order to do this, note first that the function $s(p,i,f,m,\alpha)$ is monotone decreasing in each entry of $\alpha$, so taking $\alpha=(0,0,\ldots)$ we get a uniform upper bound $s(p,i,e,m,\alpha)\leq s(p,i,e,m)$. Let $f''$ be large enough so that $\lfloor\frac{p^{s(p,i,e,m)}m-1}{f''}\rfloor =0$. Then for any $f'>f''$ the only expression in Equation~\ref{eqn: explicit formula for trfe} that can change is \[\sum_{j=s(p,i,e,m,\alpha)-1}^{s(p,i,f',m,\alpha)-1}(i-\lceil p^jm/f'\rceil -\lfloor p^j\alpha\rfloor_{\ell_1})\] and we need only show that this sum does not change the image for $f'$ large enough uniformly in $\alpha$.

For this choose $f$ to be large enough that \[\lceil p^{2s(p,i,e,m)}m/f\rceil=1\] and that $f\geq f''$ as defined above. We claim that this is the uniform bound. To see this let $f'>f$ and let $\alpha\in \NN[1/p]^{\oplus T}$. We will whow that the image of $tr_{f'e}$ is the same as the image of $tr_{fe}$ by working one value of $\alpha$ at a time, which is sufficient since we have already chosen $f$ independent of $\alpha$. Fix an $\alpha\in \NN[1/p]^{\oplus T}$. Then there are two cases we need to consider: either $s(p,i,f,m,\alpha)\geq 2s(p,i,e,m)$ or it is not. 

Suppose that $s(p,i,f,m,\alpha)\geq 2s(p,i,e,m)$. Then we have that \begin{align*}\sum_{j=s(p,i,e,m,\alpha)-1}^{s(p,i,f',m,\alpha)-1}(i-\lceil p^jm/f'\rceil -\lfloor p^j\alpha\rfloor_{\ell_1}) &\geq \sum_{j=s(p,i,e,m,\alpha)-1}^{s(p,i,f',m,\alpha)-1}1\\
&= (s(p,i,f',m,\alpha)-1)-(s(p,i,e,m,\alpha)-1)\\
&\geq 2s(p,i,e,m) - s(p,i,e,m)\\
&=s(p,i,e,m)
\end{align*}
where the first inequality comes from the definition of the $s$ function and the second inequality comes from our assumption on $s(p,i,f,m,\alpha)$, the fact that  $s(p,i,f,m,\alpha)\leq s(p,i,f',m,\alpha)$, and the fact that $s$ is monotonically decreasing in entries in $\alpha$. In particular the image of $tr_{f'e}$ on this summand are at least $p^{s(p,i,e,m)}$-divisible and are in a $p^{s(p,i,e,m,\alpha)}$-torsion group, so on these summands this is the zero map which will be stable under increasing $f'$.

Now suppose that $s(p,i,f,m,\alpha)<2s(p,i,e,m)$. It then follows that for any such $\alpha$ and any $a<2s(p,i,e,m)$, \[
\left\lceil \frac{p^a m}{f'} \right\rceil + \lfloor p^a\alpha \rfloor_{\ell_1} = 1 + \lfloor p^a \alpha \rfloor_{\ell_1}
\]
since the first term will be smaller that $p^{2s(p,i,e,m)}m/f'<p^{2s(p,i,e,m)}m/f<1$. In particular the value of this sum does not depend on $f'$. In particular $a=s(p,i,f,m,\alpha)<2s(p,i,e,m)$ is the first time this sum is larger than $i$ and the sum if independent of $f$, so  $s(p,i,f',m,\alpha)=s(p,i,f,m,\alpha)$. This was the only way the sum $\sum_{j=s(p,i,e,m,\alpha)-1}^{s(p,i,f',m,\alpha)-1}(i-\lceil p^jm/f'\rceil -\lfloor p^j\alpha\rfloor_{\ell_1})$ depends on the value of $f'$ since by assumption $ p^jm/f'<p^{s(p,i,e,m)m}/f'<1$, so $im(tr_{fe})=im(tr_{f'e})$ on these summands as well. This completes the proof.

\end{proof}

So in particular

\begin{thm}\label{thm: main thm prototype}
    Let $k$ be a perfect $\mathbb{F}_p$-algebra. Then \[\tr_{2i-1}(k[y_t^{1/p^\infty}|t\in T]/(y_t))=0\] for all sets $T$ and all $i\in \ZZ$.
\end{thm}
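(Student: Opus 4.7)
The plan is to combine the McCandless--Hesselholt identification $\tr(S_T;\ZZ_p)\simeq \varprojlim_e \Omega K(S_{e,T},(x))$ with the two inputs already established in this section: the concentration of $K(S_{e,T},(x);\ZZ_p)\simeq \widetilde{\tc}(S_{e,T};\ZZ_p)$ in odd degrees (via the degeneration of the BMS spectral sequence using Theorem~\ref{thm: B in text}) and the Mittag--Leffler property proved in the previous lemma. Because $\mathbb{N}\setminus p\NN$ is cofinal in $\mathbb{N}$, I would restrict the inverse system to $e\in I_p$ at the outset, so that the Mittag--Leffler lemma applies as stated.

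Next I would write down the Milnor short exact sequence obtained by applying $\pi_{2i-1}$ to the limit $\varprojlim_e \Omega K(S_{e,T},(x);\ZZ_p)$. After shifting across the loop, it reads
\[
0 \to \varprojlim{}^1_e K_{2i+1}(S_{e,T},(x);\ZZ_p) \to \tr_{2i-1}(S_T;\ZZ_p) \to \varprojlim_e K_{2i}(S_{e,T},(x);\ZZ_p) \to 0.
\]
The right-hand term vanishes because $K_{2i}(S_{e,T},(x);\ZZ_p)=0$ for every $e$ by Theorem~\ref{thm: B in text}, and the left-hand term vanishes by applying the previous Mittag--Leffler lemma with $i$ replaced by $i+1$. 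Hence $\tr_{2i-1}(S_T;\ZZ_p)=0$. To descend from the $p$-completed integral statement to the $p$-typical $\tr$, I would invoke the splitting $\tr(S_T;\ZZ_p)\simeq \prod_{m\in I_p}\tr(S_T)$ recalled earlier, which forces the $(2i-1)$-st homotopy group of each factor to vanish.

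The substantive work has already been done in the previous lemma, so the present theorem is essentially a formal consequence. There is no real remaining obstacle: the only small subtleties are keeping track of the loop shift when reading off the Milnor sequence, and restricting to the cofinal subsystem indexed by $I_p$ so that the Mittag--Leffler conclusion is available in the form we need.
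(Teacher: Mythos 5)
Your proposal is correct and follows the same route as the paper: identify $\tr(S_T;\ZZ_p)$ with $\varprojlim_e \Omega K(S_{e,T},(x))$, note the even relative $K$-groups vanish, and kill the $\varprojlim^1$ term in the Milnor sequence via the Mittag--Leffler lemma (with the index shift you note), then use the splitting $\tr(S_T;\ZZ_p)\simeq\prod_{m\in I_p}\tr(S_T)$. Your write-up is in fact slightly more careful than the paper's about the loop shift and the restriction to the cofinal subsystem $e\in I_p$.
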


\begin{proof}
Using the above identifications we find that 

\[
\tr_{2i-1}(k[y_t^{1/p^\infty}|t\in T]/(y_t)) \simeq \varprojlim_e \mathrm{H}^1\mathbb{Z}_p(i)\left(k[y_{t\:,}^{1/p^\infty} x|t\in T]/(y_t, x))\right).
\]
Only $\varprojlim^1$ terms can contribute to odd degrees in $\mathrm{TR}$, but this diagram is Mittag-Leffler so there can be no $\lim^1$ terms.
\end{proof}

\section{The topological restriction homology of quasiregular semiperfect rings}
We begin this section by recording a Lemma which we will need repeatedly. This Lemma is not due to us, and was proven as part of the proof of \cite[Proposition 8.12]{BMS2} and is recorded in \cite[Lemma 14.5]{Bhatt_Scholze}. We include it here for the convenience of the reader. 

\begin{lem}[\cite{Bhatt_Scholze}, Lemma 14.5]\label{lem: surj on cotangent implies surj on prism}
    Let $R$ be a perfectoid ring and $S$ and  $S'$ quasiregular semiperfectoid $R$-algebras. Let $S\to S'$ be a map which induces a surjection on $\pi_1(L_{S/R})\to \pi_1(L_{S'/R})$. Then the induced map \[\widehat{\DD}_S\to \widehat{\DD}_{S'}\] is also a surjection.\footnote{The references \cite[Proposition 8.12]{BMS2} and \cite[Lemma 14.5]{Bhatt_Scholze} only consider the map on prismatic cohomology before Nygaard completion. By a similar argument as in \cite[Proposition 7.4.6]{Bhatt_Lurie} we can ignore the Nygaard completion in the arguments that are to follow. Nevertheless the statement is also true for the Nygaard complete primsatic cohomology and so we will also record that proof as well. We would like to thank Ben Antieau for pointing out the proof of the Nygaard complete statement to us.}
\end{lem}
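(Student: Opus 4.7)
The plan is to first prove the statement without Nygaard completion by running the conjugate filtration on Hodge-Tate cohomology against the hypothesis on cotangent complexes, and then bootstrap to the Nygaard-completed version using the identification of Nygaard graded pieces with truncations of Hodge-Tate cohomology. Since $S$ and $S'$ are both quasiregular semiperfectoid over the perfectoid $R$, each cotangent complex is concentrated in $\pi_1$, and that $\pi_1$ is moreover a flat module. The hypothesis therefore amounts to saying that $L_{S/R}[-1] \to L_{S'/R}[-1]$ is a surjection of flat modules sitting in degree $0$, and the one honest algebraic input I need is the classical statement that derived divided powers preserve surjections of flat modules: each $\Gamma^i(L_{S/R}[-1]) \to \Gamma^i(L_{S'/R}[-1])$ is then a surjection concentrated in degree $0$.

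Next, I would deduce surjectivity on the Hodge-Tate cohomology $\overline{\DD} := \DD/I$, where $I$ is the ideal of the prism associated with $R$. This carries a natural, complete, exhaustive conjugate filtration whose graded pieces are (up to Breuil--Kisin twist) $\Gamma^i(L_{(-)/R}[-1])$, so naturality combined with the previous paragraph gives surjectivity on each graded piece. Because both $\overline{\DD}_S$ and $\overline{\DD}_{S'}$ are concentrated in cohomological degree $0$ for quasiregular semiperfectoid inputs, a standard $\lim^{1}$-vanishing argument promotes graded surjectivity to surjectivity of $\overline{\DD}_S \to \overline{\DD}_{S'}$ itself. A derived Nakayama argument in the $(p, I)$-adic topology then transfers the surjection from $\overline{\DD}$ to $\DD$, which proves the version before Nygaard completion and recovers the statement found in \cite[Proposition 8.12]{BMS2}.

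For the Nygaard-completed refinement, the key additional input is the identification $\mathrm{gr}^i_{\mathcal{N}}\widehat{\DD}_S \simeq \tau^{\leq i}\overline{\DD}_S\{i\}$. In the quasiregular semiperfectoid setting $\overline{\DD}_S$ lies in cohomological degree $0$, so this graded piece collapses to $\overline{\DD}_S\{i\}$ for each $i \geq 0$. Hence the surjection $\overline{\DD}_S \to \overline{\DD}_{S'}$ produced above immediately induces a surjection on each Nygaard graded piece, and completeness of the Nygaard filtration on both sides yields the desired surjection $\widehat{\DD}_S \to \widehat{\DD}_{S'}$. The main obstacle I anticipate is setting up the Nygaard graded-piece identification functorially enough in the quasiregular semiperfectoid input to chain the surjections, since the Nygaard filtration is naturally defined only after completion; the rest of the argument is formal manipulation with complete filtrations once the classical derived-divided-power fact is in hand.
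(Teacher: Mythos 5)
Your handling of the statement before Nygaard completion is essentially the paper's own argument (flatness of $\pi_1 L_{S/R}$, divided powers preserving surjections of $p$-completely flat modules, the conjugate filtration on $\overline{\DD}$, then $(p,I)$-adic completeness), so no issue there. The gap is in the Nygaard-completed half, precisely at what you call the key additional input: the identification $\mathrm{gr}^i_{\mathcal{N}}\widehat{\DD}_S \simeq \tau^{\leq i}\overline{\DD}_S\{i\}$ is the formula for \emph{smooth} (formal) algebras. For quasiregular semiperfectoid $S$ the correct statement is $\mathrm{gr}^i_{\mathcal{N}}\widehat{\DD}_S \simeq \mathrm{Fil}^{conj}_i\overline{\DD}_S\{i\}$, the $i$-th \emph{stage} of the conjugate filtration, which is in general a proper submodule of $\overline{\DD}_S\{i\}$; so your conclusion that the graded piece "collapses to $\overline{\DD}_S\{i\}$" because $\overline{\DD}_S$ is discrete is false. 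Concretely, $\mathrm{gr}^i_{\mathcal{N}}\widehat{\DD}_S\{i\}\cong \thh_{2i}(S;\ZZ_p)$, and already for $i=0$ and the prototype ring $S=k[y^{1/p^\infty}]/(y)$ of this paper one has $\thh_0(S;\ZZ_p)\cong S$, while $\overline{\DD}_S\cong \mathrm{A}_{crys}(S)/p$ also contains all the divided powers of $y$; if every Nygaard graded piece were all of $\overline{\DD}_S\{i\}$, the Nygaard filtration on $\widehat{\DD}_S$ would carry no information.

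The error is local and your strategy survives once it is repaired: $\mathrm{Fil}^{conj}_i\overline{\DD}_S\{i\}$ carries a finite filtration whose graded pieces are exactly the (twisted) divided-power terms $\Gamma^j(L_{S/R}[-1])$ that you have already shown surject, so the maps $\mathrm{gr}^i_{\mathcal{N}}\widehat{\DD}_S\to\mathrm{gr}^i_{\mathcal{N}}\widehat{\DD}_{S'}$ are surjective — this is exactly the paper's observation that the $\mathcal{N}^i$ are ``finite pieces of the conjugate filtration.'' With that correction, your ending is legitimate and genuinely different from the paper's: since everything in sight is concentrated in degree $0$ for quasiregular semiperfectoid rings, graded surjectivity gives surjectivity of each $\widehat{\DD}_S/\mathcal{N}^{\geq i}\to\widehat{\DD}_{S'}/\mathcal{N}^{\geq i}$, the kernels form a Mittag--Leffler tower, and completeness of both Nygaard filtrations then yields surjectivity of $\widehat{\DD}_S\to\widehat{\DD}_{S'}$ by successive approximation. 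The paper instead feeds the graded surjectivity into topology: the fiber $F$ of $\thh(S;\ZZ_p)\to\thh(S';\ZZ_p)$ is even, the Tate spectral sequence shows $F^{t\TT}$ is even, and surjectivity of $\widehat{\DD}_S\cong\tp_0(S;\ZZ_p)\to\tp_0(S';\ZZ_p)\cong\widehat{\DD}_{S'}$ is read off from $\pi_{-1}F^{t\TT}=0$. Your purely filtration-theoretic route avoids the cyclotomic machinery, but it stands or falls with the corrected identification of the Nygaard graded pieces.
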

\begin{proof}
    By \cite[Lemma 4.25]{BMS2} $L_{S/R}$ and $L_{S'/R}$ have $p$-complete Tor-amplitude concentrated in (homological) degree $1$. Thus both $L_{S/R}[-1]$ and $L_{S'/R}[-1]$ are $p$-completely flat $S$ and $S'$ modules, respectively. It follows that after $p$-completion $L\Gamma^n_S(L_{S/R}[-1])=\Gamma^n_S(L_{S/R}[-1])$ and $L\Gamma^n_{S'}(L_{S'/R}[-1])=\Gamma_{S'}^n(L_{S'/R}[-1])$ where $\Gamma^n$ denotes the $n^{th}$ divided powers. Then by \cite[Proposition II.4.3.2.1(ii)]{Illusie} there are equivalences \[\bigwedge\nolimits_{S'}^n(L_{S'/R})\simeq L\Gamma^n_{S'}(L_{S'/R}[-1])[n]=\Gamma^n_{S'}(L_{S'/R}[-1])[n]\] and \[\bigwedge\nolimits_S^n(L_{S/R})\simeq L\Gamma^n_S(L_{S/R}[-1])[n]=\Gamma^n_S(L_{S/R}[-1])[n]\] for all $n\in \NN$.
    Since the map $L_{S/R}\to L_{S'/R}$ is surjective it then follows that all the maps $\bigwedge\nolimits^n_S(L_{S/R})[-n]\to \bigwedge\nolimits^n_S(L_{S/R})[-n]$ are surjective maps of discrete modules for all $n\in \NN$.

    Left deriving the Hodge-Tate comparison Theorem of \cite[Theorem 6.3]{Bhatt_Scholze} there are then filtrations on the Hodge-Tate cohomology $\mathrm{Fil}^{conj}_*\overline{\DD}_{-/\mathrm{A}_{inf}(R)}$ with $\mathrm{gr}^i\overline{\DD}_{-/\mathrm{A}_{inf}(R)}\simeq \bigwedge\nolimits^i_{-}L_{-/R}[-i]\{-i\}$. Thus the above shows that the map $\overline{\DD}_{S}\to \overline{\DD}_{S'}$ is surjective. Since both $\DD_S$ and $\DD_{S'}$ are $I$-adically complete, where $I$ is the Hodge-Tate divisor, it follows that the map $\DD_S\to \DD_{S'}$ is also surjective. 

    For the statement on the Nygaard complete prismatic cohomology, note that the argument above also shows that the maps $\mathcal{N}^i\DD_S\to \mathcal{N}^{i}\DD_{S'}$ are also surjective since they are finite pieces of the conjugate filtration. Thus the map $\thh(S;\ZZ_p)\to \thh(S';\ZZ_p)$ is surjective on homotopy groups and therefore the fiber of this map $F$ has homotopy concentrated in even degrees. The Tate $\TT$ spectral sequence for $F$ therefore collapses and we find that $F^{t\TT}$ also has homotopy concentrated in even degrees. From the fiber sequence \[F^{t\TT}\to \tp(S;\ZZ_p)\to \tp(S';\ZZ_p)\] we see that the failure of the map $\widehat{\DD}_S\cong \tp_0(S;\ZZ_p)\to \tp_0(S';\ZZ_p)\cong \widehat{\DD}_{S'}$ being surjective is measured by $\pi_{-1}(F^{t\TT})$ which we have just shown vanishes.
\end{proof}

We will use the above lemma to go from the prototype quaisregular semiperfect rings to the general case. The key step in this is then the next Lemma.

\begin{lem}\label{lem: surj on cotangent implies surj on odd tr}
    Let $R$ be a perfectoid ring and $S$ and $S'$ be quasiregular semiperfectoid $R$-algebras. Suppose there is a map $S\to S'$ which induces a surjection on $\pi_1(L_{S/R})\to \pi_1(L_{S'/R})$. Then for all $i\geq 1$ the induced map \[\tr_{2i-1}(S)\to \tr_{2i-1}(S')\] is surjective. 
\end{lem}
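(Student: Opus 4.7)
My plan is to propagate the surjectivity on Nygaard-complete prismatic cohomology given by Lemma~\ref{lem: surj on cotangent implies surj on prism} up to $\tr_{2i-1}$ by passing through the curves description $\tr(A;\ZZ_p)\simeq \varprojlim_{e}\Omega\tc(A[x]/x^e,(x);\ZZ_p)$ and applying the six-term $\varprojlim/\varprojlim\nolimits^1$ exact sequence to a short exact sequence of towers of relative $\tc$. The distinction between the $p$-typical $\tr$ of the statement and the $p$-completed integral $\tr(-;\ZZ_p)$ appearing in this description is immaterial thanks to the splitting $\tr(A;\ZZ_p)\simeq \prod_{m\in I_p}\tr(A)$, so it suffices to work $p$-completely.

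First I would verify that the hypothesis of Lemma~\ref{lem: surj on cotangent implies surj on prism} is inherited by the map $S[x]/x^e\to S'[x]/x^e$ for each $e\geq 1$. The transitivity cofiber sequence
\[
L_{S/R}\otimes^L_{S}S[x]/x^e\to L_{S[x]/x^e/R}\to L_{S[x]/x^e/S}
\]
does the job, because $L_{S[x]/x^e/S}$ is base-changed from $L_{R[x]/x^e/R}$ and therefore agrees on the $S$ and $S'$ sides. Invoking Lemma~\ref{lem: surj on cotangent implies surj on prism} then produces surjections $\widehat{\DD}_{S[x]/x^e}\to \widehat{\DD}_{S'[x]/x^e}$ and $\mathcal{N}^{\geq i}\widehat{\DD}_{S[x]/x^e}\to \mathcal{N}^{\geq i}\widehat{\DD}_{S'[x]/x^e}$ at each $e$.

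Next I translate these into surjectivity on relative $\tc$. Because $\widehat{\DD}_A$ is discrete for quasiregular semiperfectoid $A$ and $\widehat{\DD}_{\FF_p[x]/x^e}$ is concentrated in cohomological degrees $[0,1]$, the tensor decomposition $\widehat{\DD}_{A[x]/x^e}\simeq \widehat{\DD}_A\otimes^L_{\ZZ_p}\widehat{\DD}_{\FF_p[x]/x^e}$ places $\widehat{\DD}_{A[x]/x^e}$ and each Nygaard piece in degrees $[0,1]$. The fiber $\ZZ_p(i)(A[x]/x^e)=\mathrm{fib}(\phi_i-\mathrm{can})$ is therefore supported in degrees $0,1,2$, and a snake-lemma chase from the surjections of the previous step yields a surjection on $H^1$; the $H^2$ piece vanishes by the same Frobenius-dominance argument as in the prototype computation of Section~2. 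The BMS spectral sequence degenerates, and splitting off the absolute piece gives level-wise surjections $\widetilde{\tc}_{2i-1}(S[x]/x^e;\ZZ_p)\to \widetilde{\tc}_{2i-1}(S'[x]/x^e;\ZZ_p)$, and analogously for $\widetilde{\tc}_{2i+1}$.

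Finally I pass to the inverse limit. Extending the even-vanishing of $\widetilde{\tc}$ from the prototype calculation of Theorem~\ref{thm: B in text} to an arbitrary quasiregular semiperfectoid base (using the same divided-power Legendre bookkeeping, now applied inside the factor $\widehat{\DD}_A\otimes(-)$), the Milnor sequence identifies $\tr_{2i-1}(-;\ZZ_p)$ with $\varprojlim\nolimits^1_e \widetilde{\tc}_{2i+1}((-)[x]/x^e;\ZZ_p)$. The level-wise surjections assemble into a short exact sequence of towers, and the six-term $\varprojlim/\varprojlim\nolimits^1$ exact sequence automatically yields surjectivity on the target $\varprojlim\nolimits^1$, hence on $\tr_{2i-1}$. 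The main obstacle is the extension of the even-vanishing calculation from prototype to general quasiregular semiperfectoid $S$, which is the step requiring the most bookkeeping but goes through by the same Frobenius-dominance argument modulo replacing the explicit $y$-factor with a general discrete $\widehat{\DD}_A$.
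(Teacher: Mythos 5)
There are two genuine gaps here, and together they undercut the strategy. First, the load-bearing step of your argument is deferred: you need, levelwise in $e$, that $\widetilde{\tc}(S[x]/x^e,(x);\ZZ_p)$ is concentrated in odd degrees (and you need this for a \emph{general} quasiregular semiperfectoid $S$ over an arbitrary perfectoid $R$), and you assert this follows ``by the same Frobenius-dominance bookkeeping'' as in the prototype case. But that bookkeeping is not available in general: the computation of Theorem~\ref{thm: B in text} rests on the explicit presentation of $\widehat{\DD}_{S_T}$ as a $p$-completed divided-power polynomial algebra over $W(k)$, with its $\NN[1/p]$-grading, free graded pieces, Legendre-formula valuations, and the resulting function $s(p,i,e,m,\alpha)$; a general $\widehat{\DD}_S$ admits no such grading or freeness, so ``replacing the $y$-factor by a general discrete $\widehat{\DD}_S$'' is not a routine substitution. (Relatedly, you cannot invoke Lemma~\ref{lem: surj on cotangent implies surj on prism} for $S[x]/x^e\to S'[x]/x^e$ as stated, since $S[x]/x^e$ is not quasiregular semiperfectoid and its cotangent complex has Tor-amplitude in $[0,1]$, not $1$; one would have to argue via the symmetric monoidality $\widehat{\DD}_{S[x]/x^e}\simeq \widehat{\DD}_S\otimes \widehat{\DD}_{\FF_p[x]/x^e}$ instead.) Note also that if the evenness of relative $\tc$ for arbitrary quasiregular semiperfectoid rings were established, it together with a Mittag-Leffler argument would already be most of Theorem~\ref{thm: A in text}; the whole point of this lemma in the paper is to \emph{avoid} extending the explicit computation beyond the prototypes.

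Second, even granting the evenness, the claim that ``a snake-lemma chase yields a surjection on $H^1$'' of the relative syntomic complexes is not formal. From the fiber sequence defining $\widetilde{\ZZ_p}(i)$, the group $H^1(\widetilde{\ZZ_p}(i)(S[x]/x^e))$ is an extension of $\ker\bigl(\phi_i-\mathrm{can}\colon H^1(\mathcal{N}^{\geq i})\to H^1(\widehat{\DD})\bigr)$ by a cokernel in degree $0$, and surjectivity of the ambient maps on $\mathcal{N}^{\geq i}\widehat{\DD}$ and $\widehat{\DD}$ does \emph{not} imply surjectivity on those kernels; in the prototype case this required the explicit generator of the kernel built in Construction~\ref{con: generator of the kernel}. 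This is exactly why the paper's proof takes a different route: it uses the filtration on $\tr$ from \cite[Corollary 7.2]{riggenbach2023ktheory}, observes that for quasiregular semiperfectoid rings the graded pieces live in degrees $[2i-1,2i]$ so that $\tr_{2i-1}$ is computed as a \emph{coequalizer} (a cokernel) of explicit maps of products of Nygaard quotients, and then surjectivity follows from right-exactness once Lemma~\ref{lem: surj on cotangent implies surj on prism} and reduction mod the ideals $I_n$ give surjectivity on the targets. If you want to salvage your $\varprojlim/\varprojlim^1$ approach, you would need an independent argument producing surjectivity on the kernel terms (or on $H^1$ of the relative syntomic complexes) for general $S\to S'$, which is essentially the content the paper's coequalizer presentation supplies for free.
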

\begin{proof}
    From Lemma~\ref{lem: surj on cotangent implies surj on prism} we have that the map $\widehat{\DD}_S\to \widehat{\DD}_{S'}$ is a surjection. Let $d\in \mathrm{A}_{inf}(R)$ be an orientation. Then by \cite[Lemma 3.5]{Bhatt_Scholze} the Hodge-Tate divisor $I$ is $\Tilde{d}\DD_S$ in $\DD_S$ and $\Tilde{d}\DD_{S'}$ in $\DD_{S'}$. Consequently $I_n=I\cdot \phi^*(I)\cdot\ldots\cdot (\phi^{n-1})^*(I)$ is the ideal generated by $\Tilde{d}_n$ in both $\DD_S$ and $\DD_{S'}$. It then follows that for all $i,n\geq 0$ that the maps \[\widehat{\DD}_S\{i\}/I_n\to \widehat{\DD}_{S'}\{i\}/I_n\] are surjective since these maps are the map obtained by applying $-\otimes_{\mathrm{A}_{inf}(R)} \mathrm{A}_{inf}(R)/\Tilde{d}_n \{i\}$ to a surjective map and tensoring is right exact. 

    Recall from \cite[Corollary 7.2]{riggenbach2023ktheory} that there is a functorial filtration on $\tr(-)$ with associated graded given by \[\mathrm{gr}^i\tr(-):=\operatorname{Eq}\left(\begin{tikzcd}
        \prod_{n\geq 0}\mathcal{N}^{\geq i}\mathcal{O}_{\widehat{\DD}}/(\mathcal{N}^{\geq i+1}\mathcal{O}_{\widehat{\DD}}\otimes \mathcal{I}_n)\{i\} \arrow[r, shift left] \arrow[r, shift right] & \prod_{n\geq 0} \mathcal{O}_{\widehat{\DD}}/\mathcal{I}_n\{i\}
    \end{tikzcd}\right)[2i].\] This filtration is constructed by using the identification \[\tr(-)\simeq \operatorname{Eq}\left(\begin{tikzcd}
        \prod_{n\geq 0}\thh(-;\ZZ_p)^{hC_{p^n}} \arrow[r, shift left] \arrow[r, shift right] & \prod_{n\geq 0} \thh(-;\ZZ_p)^{tC_{p^n}}
    \end{tikzcd}\right)\] given by \cite[Remark 2.4.5]{McCandless_curves}. The terms appearing in this equalizer diagram are filtered and the filtration on $\tr(-)$ is then given by taking the equalizer in filtered spectra. For quasiregular semiperfectoid rings $\mathrm{gr}^i\tr(-)$ is concentrated in homological degrees $[2i-1,2i]$. Thus the spectral sequence associated to this filtration collapses and the equilizer and coequilizer of the above maps computes $\tr_{2i}(-)$ and $\tr_{2i-1}(-)$, respectively. 

    By functoriality there is then a commutative diagram \[\begin{tikzcd}
        \prod_{n\geq 0}(\mathcal{N}^{\geq i}\widehat{\DD}_S/(I_n\mathcal{N}^{\geq i+1}\widehat{\DD}_S))\{i\}\arrow[d] \arrow[r, shift left] \arrow[r, shift right] & \prod_{n\geq 0} \widehat{\DD}_{S}/I_n\{i\}\arrow[d]\\
        \prod_{n\geq 0}(\mathcal{N}^{\geq i}\widehat{\DD}_{S'}/(I_n\mathcal{N}^{\geq i+1}\widehat{\DD}_{S'}))\{i\} \arrow[r, shift left] \arrow[r, shift right] & \prod_{n\geq 0} \widehat{\DD}_{S'}/I_n\{i\}
    \end{tikzcd}\] and by the above the right vertical arrow is a surjection. Thus by the snake lemma the map on coequilizers is also surjective, and so the result follows.
\end{proof}

We are now ready to prove the main theorem.

\begin{thm}~\label{thm: A in text}
    Let $S$ be a quasiregular semiperfect $\mathbb{F}_p$-algebra. Then \[\tr_{2i-1}(S)=0\] for all $i$.
\end{thm}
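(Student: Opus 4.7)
The plan is to reduce the general quasiregular semiperfect case to the prototype case handled by Theorem~\ref{thm: main thm prototype}, using Lemma~\ref{lem: surj on cotangent implies surj on odd tr} as the bridge. Concretely, given a quasiregular semiperfect $\mathbb{F}_p$-algebra $S$, I will construct a prototype $S_T = \mathbb{F}_p[y_t^{1/p^\infty}\,|\,t\in T]/(y_t)$ together with a map $S_T \to S$ whose induced map on $\pi_1(L_{-/\mathbb{F}_p})$ is surjective. The lemma then produces a surjection $\tr_{2i-1}(S_T) \to \tr_{2i-1}(S)$ for each $i \geq 1$, and since $\tr_{2i-1}(S_T) = 0$ by Theorem~\ref{thm: main thm prototype} (applied with $k = \mathbb{F}_p$), we will conclude $\tr_{2i-1}(S) = 0$.

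To build the map, I would use the tilt $S^\flat := \varprojlim_\phi S$, which is a perfect $\mathbb{F}_p$-algebra equipped with a canonical surjection $S^\flat \to S$ since $S$ is semiperfect; let $J$ denote the kernel. The vanishing $L_{S^\flat/\mathbb{F}_p} = 0$ combined with the transitivity sequence identifies $L_{S/\mathbb{F}_p} \simeq L_{S/S^\flat}$, and quasiregularity of $S$ then gives $\pi_1(L_{S/\mathbb{F}_p}) \cong J/J^2$ as a flat $S$-module. Now choose an indexing set $T$ together with a family $\{\beta_t\}_{t\in T} \subset J/J^2$ generating as an $S$-module, and lift each $\beta_t$ to $\widetilde{\beta}_t \in J$. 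Perfectness of $S^\flat$ supplies compatible $p$-power roots $\widetilde{\beta}_t^{1/p^n} \in S^\flat$, and sending $y_t^{1/p^n}$ to the image of $\widetilde{\beta}_t^{1/p^n}$ in $S$ defines the required ring map $S_T \to S$ (the relation $y_t = 0$ is respected because $\widetilde{\beta}_t \in J$).

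It remains to verify surjectivity of the induced map on $\pi_1(L_{-/\mathbb{F}_p})$. The source computes as the free $S_T$-module $\pi_1(L_{S_T/\mathbb{F}_p}) \cong \bigoplus_{t\in T} S_T \cdot \bar{y}_t$, and our map sends $\bar{y}_t$ to $\beta_t \in J/J^2 \cong \pi_1(L_{S/\mathbb{F}_p})$. Since the $\beta_t$ were chosen to generate, the induced map hits a generating set and is therefore surjective, so Lemma~\ref{lem: surj on cotangent implies surj on odd tr} applies with $R = \mathbb{F}_p$ and completes the reduction. The genuinely hard work has already been done in the prototype computation of Theorem~\ref{thm: main thm prototype} and in Lemma~\ref{lem: surj on cotangent implies surj on odd tr}; what remains is the formal cotangent-complex lifting argument above, which is the only (and modest) remaining obstacle.
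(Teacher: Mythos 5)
Your overall strategy is the same as the paper's: reduce to the prototype rings via Lemma~\ref{lem: surj on cotangent implies surj on odd tr} and then quote Theorem~\ref{thm: main thm prototype}. The gap is in the verification of the hypothesis of that lemma. The map you construct, $S_T=\mathbb{F}_p[y_t^{1/p^\infty}\,|\,t\in T]/(y_t)\to S$ with $y_t\mapsto \widetilde{\beta}_t$, induces on $\pi_1(L_{-/\mathbb{F}_p})$ a map that is only linear over $S_T$ (equivalently over its image in $S$), not over $S$. Its image is therefore the $\varphi(S_T)$-span of the classes $\beta_t$, where $\varphi(S_T)\subseteq S$ is the subalgebra generated by $\mathbb{F}_p$ and the (nilpotent) images of the $\widetilde{\beta}_t^{1/p^n}$; choosing the $\beta_t$ to generate $J/J^2$ merely \emph{as an $S$-module} does not make this span all of $J/J^2$. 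Concretely, take $S=k'[y^{1/p^\infty}]/(y)$ for a perfect field $k'\supsetneq\mathbb{F}_p$: then $J/J^2\cong S$ is generated by the single class of $y$, your recipe produces the natural map $\mathbb{F}_p[y^{1/p^\infty}]/(y)\to S$, and the induced map on $\pi_1(L_{-/\mathbb{F}_p})$ is the inclusion $\mathbb{F}_p[y^{1/p^\infty}]/(y)\hookrightarrow k'[y^{1/p^\infty}]/(y)$, which is not surjective. So Lemma~\ref{lem: surj on cotangent implies surj on odd tr} does not apply as you have set it up, and the reduction breaks at exactly the step you describe as ``modest.''

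The paper avoids this in two ways at once: it takes the coefficient ring of the prototype to be $k=S^\flat$ (so the prototype already surjects onto $S$ on the level of rings, making the image of the structure map all of $S$) and it indexes the variables by the entire kernel $J$ of $S^\flat\to S$, so every class in $J/J^2$ is hit outright; the map is $S^\flat[x_j^{1/p^\infty}\,|\,j\in J]/(x_j)\to S$. Your argument can be repaired with either fix — keep $k=\mathbb{F}_p$ but let $T=J$ run over all elements of the kernel (then each class of $J/J^2$ is in the image set-theoretically), or keep your chosen generators but base the prototype on $k=S^\flat$, which is allowed since Theorem~\ref{thm: main thm prototype} holds for any perfect $k$ — but as written the surjectivity claim is false and needs this correction. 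The remaining steps (applying the lemma with a perfectoid base and invoking the prototype computation) are fine.
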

\begin{proof}
    We will follow the reduction used in the proof of \cite[Proposition 8.12]{BMS2}. Let $S^\flat$ be the inverse limit perfection of $S$. Then $S^\flat\to S$ is surjective since $S$ is semiperfect. Let $J$ denote the kernel of $S^{\flat}\to S$. Then there is a map $S^{\flat}[x_j^{1/p^\infty}|j\in J]/(x_j)\to S$ which induces a surjection $\pi_1\left(L_{S^{\flat}[x_j^{1/p^\infty}|j\in J]/(x_j)/S^{\flat}}\right)\to \pi_1(L_{S/S^{\flat}})$. 
    
    Thus by Lemma~\ref{lem: surj on cotangent implies surj on odd tr} the maps \[\tr_{2i-1}(S^{\flat}[x_j^{1/p^\infty}|j\in J]/(x_j))\to \tr_{2i-1}(S)\] are surjective for all $i\in \ZZ$. The result then follows by applying Theorem~\ref{thm: main thm prototype}. 
\end{proof}

We are now also ready to compare this filtration to the filtration by de Rham-Witt forms. We record the construction of this filtration here but stress that these ideas are not due to us.

\begin{con}
    Consider the functor $\tr(-):\mathrm{Sch}_{\mathbb{F}_p}\to \mathrm{TCart}_p^\wedge$ where the image is the category of $V$-complete topological Cartier modules in the sense of \cite[Theorem 9]{Antieau_Nikolaus}. As such a functor $\tr(-)$ is left Kan extended from $\mathrm{Sm}_{\FF_p}$. To see this note that by completeness it is enough to check modulo $V$, but then $\tr(-)/V\simeq \thh(-)$ and so is left Kan extended as desired.

    Consider now the functor $F^{\geq *}_H\tr(-):\mathrm{Sm}_{\FF_p}\to \operatorname{DF}(\mathrm{TCart}_p^\wedge)$ given by $F^{\geq *}\tr(-)=\tau_{\geq *}\tr(-)$ and by \cite[Theorem C]{Hesselholt_curves} we can identify $\mathrm{gr}^*_{H}\tr(-)\simeq W\Omega^i_{-}[i]$ with the usual $V$-adic completion. Left Kan extending then gives an exhaustive filtration $F^{\geq *}_{H}\tr(-)$ for all $\FF_p$-schemes such that $\mathrm{gr}^*\tr(-)\simeq (\mathrm{LW}\Omega^i_{-})^\wedge[i]$ where the completion is the derived $V$-adic completion. 
\end{con}

\begin{cor}~\label{cor: hesselholt filtration=BMS filtration}
    Let $R$ be an animated $\mathbb{F}_p$-algebra. Then the filtration of $\mathrm{TR}(R)$ given in \cite[Corollary 7.2]{riggenbach2023ktheory} is equivalent to the filtration $F_H^{\geq *}\tr(R):\mathrm{CAlg}_{\FF_p}\to \widehat{\operatorname{DF}}(\mathrm{TCart_p}^\wedge)$ described above. 
\end{cor}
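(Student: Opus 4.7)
The plan is to exhibit the Riggenbach filtration $F^{\geq *}_R\tr$ of \cite[Corollary 7.2]{riggenbach2023ktheory} as the left Kan extension from $\mathrm{Sm}_{\FF_p}$ of the Postnikov filtration on $\tr$. Since $F^{\geq *}_H\tr$ is by construction this left Kan extension, this yields the desired equivalence of filtered functors.

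First I would verify that on smooth $\FF_p$-algebras the two filtrations agree with the Postnikov filtration on $\tr$. For smooth $R$, classical results of Hesselholt identify $\tr(R)$ as concentrated in even degrees with $\tr_{2i}(R)\cong W\Omega^i_R$, so the Postnikov filtration has graded pieces that are just $W\Omega^i_R$ in a single homological degree. The Riggenbach graded, given by the equalizer in the first Corollary of the Introduction, reduces on smooth $R$ to the same $W\Omega^i_R\simeq \mathrm{LW}\Omega^i_R$, again concentrated in a single degree (using that derived and classical de Rham--Witt agree for smooth inputs). Matching gradeds in a single degree, together with exhaustiveness and completeness of both filtrations on $\tr(R)$, force them to coincide on smooth inputs. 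The universal property of left Kan extension then produces a canonical natural transformation $F^{\geq *}_H\tr\to F^{\geq *}_R\tr$ on all animated $\FF_p$-algebras which restricts to the identity on smooth inputs.

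To show this comparison is an equivalence I would pass to associated gradeds. By construction and left Kan extension, the gradeds of $F^{\geq *}_H\tr$ are the derived de Rham--Witt forms $(\mathrm{LW}\Omega^i_-)^\wedge$ appropriately shifted. The gradeds of $F^{\geq *}_R\tr$ are also $(\mathrm{LW}\Omega^i_-)^\wedge$ by the first Corollary of the Introduction, and this is precisely where Theorem~\ref{thm: A in text} is essential: the vanishing of $\tr_{2i-1}$ on quasiregular semiperfects is what collapses the equalizer defining the Riggenbach gradeds to a single cohomological degree, matching $\mathrm{LW}\Omega^i$. The comparison map agrees with these natural identifications on gradeds, and so is an equivalence. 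The main obstacle is the smooth-case identification in the previous paragraph: the Riggenbach filtration is defined abstractly via prismatic cohomology while the Postnikov filtration is homotopy-theoretic, and pinning them down as \emph{the same} filtration on smooth inputs (rather than merely abstractly equivalent ones) requires both the naturality of the identifications and a cohomological boundedness result for the syntomic complex of smooth rings in the spirit of Corollary~\ref{cor: AMMN for char p}. Once the smooth case is secured, the remainder is formal left Kan extension and completeness manipulation.
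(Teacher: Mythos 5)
There is a genuine circularity in your argument. To show your comparison map is an equivalence you pass to associated gradeds and assert that ``the gradeds of $F^{\geq *}_R\tr$ are also $(\mathrm{LW}\Omega^i_-)^\wedge$ by the first Corollary of the Introduction'' --- but that corollary is a restatement of the result you are proving (its ``in particular'' clause, identifying $\mathrm{LW}\Omega^i_R$ with the fiber of $\phi_i-\mathrm{can}$ on Nygaard-filtered prismatic cohomology, is deduced from Corollary~\ref{cor: hesselholt filtration=BMS filtration}). A priori the gradeds of the filtration of \cite[Corollary 7.2]{riggenbach2023ktheory} are only that equalizer of prismatic complexes; identifying them with derived de Rham--Witt forms on non-smooth rings is exactly the content at stake. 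Relatedly, your opening plan to ``exhibit the Riggenbach filtration as the left Kan extension from $\mathrm{Sm}_{\FF_p}$'' has no mechanism behind it: that filtration is constructed by quasisyntomic descent, not by left Kan extension, so even granting agreement on smooth algebras and the resulting map $F^{\geq *}_H\tr\to F^{\geq *}_R\tr$, you need some handle on $F^{\geq *}_R\tr$ of non-smooth rings to conclude --- either that it is left Kan extended (which is the statement itself) or a descent argument. The paper takes the second route: both filtrations are quasisyntomic sheaves (for $F_H$ because $\tr$ and $(\mathrm{LW}\Omega^i)^\wedge_V[i]$ are), so it suffices to compare them on quasiregular semiperfect rings, where $\mathrm{LW}\Omega^i_R$ is concentrated in degree $i$ and Theorem~\ref{thm: A in text} forces both filtrations to be the double-speed Postnikov filtration of $\tr(R)$. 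Your proposal invokes Theorem~\ref{thm: A in text} only in a heuristic aside, without the descent framework that makes it applicable.

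A second, more local error: for smooth $\FF_p$-algebras $\tr(R)$ is \emph{not} concentrated in even degrees; Hesselholt's theorem gives $\tr_q(R;p)\cong\bigoplus_{i\geq 0}W\Omega^{q-2i}_R$, which has plenty of odd homotopy. The identification $\mathrm{gr}^i\simeq W\Omega^i_R[i]$ used to build $F^{\geq *}_H\tr$ takes place for the Postnikov filtration in the category of $V$-complete topological Cartier modules (via \cite[Theorem C]{Hesselholt_curves}), not via evenness of ordinary $\tr$ of smooth rings. Since your matching of the two filtrations on smooth inputs rests on this false evenness claim, even the base case of your strategy is not established as written.
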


\begin{proof}
    Denote by $F^{\geq *}_{BMS}\tr(R)$ the filtration constructed in \cite[Corollary 7.2]{riggenbach2023ktheory} and $F^{\geq *}_{H}\tr(R)$ the filtration outlined in the statement of the Corollary. Then $F^{*}_{BMS}\tr(R)$ is a quasisyntomic sheaf by definition and $F^{\geq *}_{H}\tr(R)$ is a quasisyntomic sheaf since $F^{\geq -\infty}
    _H\tr(R)\simeq \tr(R)$ and $\mathrm{gr}^i_{H}\tr(R)=(\mathrm{LW}\Omega^i_R)^\wedge_V[i]$ are quasisyntomic sheaves. Thus to show these are equivalent it is enough to prove this for quasiregular semiperfect $\FF_p$-algebras.

    By definition for $R$ quasiregular semiperfect we have that $L_{R/\FF_p}$ is a flat $R$-module in homological degree $1$. Consequently $\bigwedge^iL_{R/\FF_p}$ is concentrated in homological degree $i$ by the same argument as in Lemma~\ref{lem: surj on cotangent implies surj on prism}. Inductively we have that $\mathrm{LW}_n\Omega_R^i$ is concentrated in homological degree $i$, and therefore $\mathrm{LW}\Omega_R^i$ is concentrated in homological degrees $[i+1,i]$. In particular the spectral sequence associated to this filtration has no room for differentials, and by Theorem~\ref{thm: A} we have that $\mathrm{LW}\Omega_R^i$ is discrete and agrees with $\tr_{2i}(R)$. Both this filtration and the filtration $F^{\geq *}_{BMS}\tr(R)$ are then the double speed Postnikov filtration and are therefore naturally equivalent. The result follows.
\end{proof}

Using this result we can now also prove Corollary~\ref{cor: AMMN for char p}.

\begin{cor}
    Let $R$ be an animated $\FF_p$-algebra. Then the syntomic cohomology $\ZZ_p(i)(R)$ is concentrated in cohomological degrees at most $i+1$.
\end{cor}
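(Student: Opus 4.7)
The plan is to combine Corollary~\ref{cor: hesselholt filtration=BMS filtration} with the classical fiber sequence
\[\tc(R;\ZZ_p) \to \tr(R;\ZZ_p) \xrightarrow{F-1} \tr(R;\ZZ_p)\]
relating $\tc$ to $\tr$ via the Frobenius $F$, and then to run a connectivity argument on graded pieces.

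First I would invoke Corollary~\ref{cor: hesselholt filtration=BMS filtration} to identify the filtration on $\tr(R;\ZZ_p)$ constructed in \cite{riggenbach2023ktheory} with the left Kan extension of the double-speed Postnikov filtration from smooth $\FF_p$-algebras; its $i$-th graded piece is then $(\mathrm{LW}\Omega^i_R)^{\wedge}[i]$. Since on smooth $\FF_p$-algebras the functor $W\Omega^i_{-}$ is valued in discrete abelian groups (placed in cohomological degree $0$), the left Kan extension $\mathrm{LW}\Omega^i_R$ to any animated $\FF_p$-algebra $R$ is connective, and so $(\mathrm{LW}\Omega^i_R)^{\wedge}[i]$ sits in cohomological degrees at most $-i$.

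Next, the BMS motivic filtration on $\tc(R;\ZZ_p)$ has $i$-th graded piece $\ZZ_p(i)(R)[2i]$. Granting that $F$ respects the Riggenbach filtration on $\tr$ and descends to an endomorphism $\overline{F}$ on each graded piece, passing to graded pieces in the displayed fiber sequence yields
\[\ZZ_p(i)(R)[2i] \simeq \mathrm{fib}\!\left((\mathrm{LW}\Omega^i_R)^{\wedge}[i] \xrightarrow{\overline{F}-1} (\mathrm{LW}\Omega^i_R)^{\wedge}[i]\right).\]
Since both source and target lie in cohomological degrees $\leq -i$, the fiber lies in cohomological degrees $\leq -i+1$. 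Unshifting by $2i$ then gives $\ZZ_p(i)(R)$ in cohomological degrees $\leq i+1$, as desired.

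The step I expect to require the most care is the filtration-compatibility of $F$: one must check that $F$ preserves the Riggenbach filtration on $\tr$ and that the resulting fiber on each graded piece is canonically identified with $\ZZ_p(i)(R)[2i]$, the BMS graded piece of $\tc$. Morally this amounts to reconciling the equalizer presentation of $\tr$ using $C_{p^n}$-homotopy and $C_{p^n}$-Tate fixed points (recalled just before Corollary~\ref{cor: hesselholt filtration=BMS filtration}) with the analogous presentation of $\tc$ using $\TT$-homotopy and $\TT$-Tate fixed points, while tracking the Frobenius; writing the comparison out carefully is the main technical hurdle. Everything else in the argument is a formal consequence of the connectivity estimate above.
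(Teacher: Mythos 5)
Your proposal is essentially the paper's own argument: the paper likewise uses Corollary~\ref{cor: hesselholt filtration=BMS filtration} to identify the $i$-th graded piece of the $\tr$-filtration with $\mathrm{LW}\Omega^i_R[i]$ (hence, after unshifting by $[2i]$, concentrated in cohomological degrees at most $i$), and then exhibits $\ZZ_p(i)(R)$ as the fiber of $1-F$ on that graded piece, a fiber which can raise the cohomological bound by at most one. The Frobenius-compatibility you flag as the main technical hurdle is discharged in the paper simply by writing the graded piece explicitly as $\mathrm{fib}\bigl(\prod_{n}\mathcal{N}^{\geq i}\DD_R/p^n\mathcal{N}^{\geq i+1}\DD_R\xrightarrow{\phi_i-\mathrm{can}}\prod_{n}\DD_R/p^n\bigr)$ and describing $F$ concretely as the map induced by the reduction maps in the index $n$, so your route and the paper's coincide.
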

\begin{proof}
    From Corollary~\ref{cor: hesselholt filtration=BMS filtration} we have that \[\mathrm{LW}\Omega^i_R[i]\simeq \mathrm{gr}^i_H \tr(R)\simeq \mathrm{gr}^i_{BMS}\tr(R)\simeq \mathrm{fib}\left(\prod_{n\in \NN} \mathcal{N}^{\geq i}\DD_R/p^n\mathcal{N}^{\geq i+1}\DD_R\xrightarrow{\phi_i-can}\prod_{n\in \mathbb{N}}\DD_R/p^n\right)[2i]\] and so $\mathrm{fib}\left(\prod_{n\in \NN} \mathcal{N}^{\geq i}\DD_R/p^n\mathcal{N}^{\geq i+1}\DD_R\xrightarrow{\phi_i-can}\prod_{n\in \mathbb{N}}\DD_R/p^n\right)$ is concentrated in cohomological degrees at most $i$. The syntomic cohomology $\ZZ_p(i)(R)$ is then given by the fiber of the map \[1-F:\mathrm{fib}\left(\prod_{n\in \NN} \mathcal{N}^{\geq i}\DD_R/p^n\mathcal{N}^{\geq i+1}\DD_R\xrightarrow{\phi_i-can}\prod_{n\in \mathbb{N}}\DD_R/p^n\right)\to \mathrm{fib}\left(\prod_{n\in \NN} \mathcal{N}^{\geq i}\DD_R/p^n\mathcal{N}^{\geq i+1}\DD_R\xrightarrow{\phi_i-can}\prod_{n\in \mathbb{N}}\DD_R/p^n\right)\] where $F$ is the map induced by the mod $p$ maps $\mathcal{N}^{\geq i}\DD_R/p^n\mathcal{N}^{\geq i+1}\DD_R\to \mathcal{N}^{\geq i}\DD_R/p^{n-1}\mathcal{N}^{\geq i+1}\DD_R$ and $\DD_R/p^n\to \DD_R/p^{n-1}$. The fiber can introduce a nonzero cohomology group in degree $i+1$ but no higher, so the result follows.
\end{proof}

\printbibliography
\end{document}